\def\G{{\mathscr G}}
\def\C{{\mathcal C}}
\theoremstyle{plain}
\newtheorem{theorem}{Theorem}
\newtheorem{lemma}{Lemma}
\theoremstyle{definition}
\newtheorem{definition}{Definition}
\numberwithin{equation}{section}
\numberwithin{theorem}{section}
\numberwithin{definition}{section}
\numberwithin{lemma}{section}
\numberwithin{corollary}{section}
\numberwithin{prop}{section}
\numberwithin{remark}{section}
\numberwithin{example}{section}
\newcommand{\s}{\sigma}
\newcommand{\m}{\mu}
\renewcommand{\t}{\tau}
\newcommand{\D}{\Delta}
\newcommand{\R}{\ensuremath{\mathbb{R}}}
\newcommand{\Z}{\ensuremath{\mathbb{Z}}}
\def\H{{\mathcal H}}
\newcommand{\N}{\ensuremath{\mathbb{N}}}
\newcommand{\T}{\ensuremath{\mathbb{T}}}
\newcommand{\I}{\ensuremath{\mathbb{I}}}
\DeclareMathOperator{\myRe}{Re}
\DeclareMathOperator{\myIm}{Im}
\def\Rem{{\myRe}_\mu}
\def\Imm{{\myIm}_\mu}
\def\({\left(}
\def\){\right)}
\DeclareMathOperator{\Log}{Log}
\begin{document}


\title[Linear State Feedback Stabilization on Time Scales]{Linear State Feedback Stabilization\\on Time Scales}
\author[Jackson, Davis, Gravagne, Marks]{Billy J. Jackson$^1$, John M. Davis$^2$, Ian A. Gravagne$^3$, Robert J. Marks II$^3$}
\address{$^1$School of Mathematical Sciences, University of Northern Colorado, Greeley, CO 80639}
\email{billy.jackson@unco.edu}
\address{$^2$Department of Mathematics, Baylor University, Waco, TX 76798}
\email{John\_M\_Davis@baylor.edu}
\address{$^3$Department of Electrical and Computer Engineering, Baylor University, Waco, TX 76798}
\email{Ian\_Gravagne@baylor.edu,  Robert\_Marks@baylor.edu}
\keywords{time scale, feedback control, Gramian, exponential stability, systems theory.}
\subjclass[2000]{93B52, 93D15}
\thanks{*This work was supported by NSF Grant CMMI\#726996. Please see {\tt http://www.timescales.org/} for other papers from the Baylor Time Scales Research Group.}

\begin{abstract}
For a general class of dynamical systems (of which the canonical continuous and uniform discrete versions are but special cases), we prove that there is a state feedback gain such that the resulting closed-loop system is uniformly exponentially stable with a prescribed rate. The methods here generalize and extend Gramian-based linear state feedback control to much more general time domains, e.g. nonuniform discrete or a combination of continuous and discrete time. In conclusion, we discuss an experimental implementation of this theory.
\end{abstract}

\maketitle


\section{Introduction}

Linear systems theory is well-studied in both the continuous and discrete settings \cite{AnMi,CaDe,Ru}, but recently an important line of investigation has been generalizing the known linear systems theory on $\R$ and $\Z$ to nonuniform discrete domains or domains with a mixture of discrete and continuous parts. Progress toward this has been made on the topics of controllability/observability and reachability/realizability \cite{DaGrJaMa, Ja}, Laplace transforms \cite{DaGrJaMaRa,DaGrMa1, DaGrMa2}, Fourier transforms \cite{MaGrDa}, Lyapunov equations \cite{Da3, DaGrMaRa}, and various types of stability results including Lyapunov, exponential, and BIBO \cite{BoMa,Da3,Ja}. The goal is not to simply reprove existing, well-known theories, but rather to view $\R$ and $\Z$ as special cases of a single, overarching theory and to extend the theory to dynamical and control systems on these more general domains. Doing so reveals a rich mathematical structure which has great potential for new applications in diverse areas such as adaptive control \cite{GrDaDa}, real-time communications networks \cite{GrDaDaMa,GrDaMa}, dynamic programming \cite{SeSaWu}, switched systems \cite{MaGrDaDa}, stochastic models \cite{BhEvPeRa}, population models \cite{Zh}, and economics \cite{AtBiLe,AtUy}. The focus of this paper is the study of linear state feedback controllers \cite{Ka, SuFo} in this generalized setting and to compare and contrast these results with the standard continuous and uniform discrete scenarios.

\section{Time Scales Background}

\subsection{What Are Time Scales?}

The theory of time scales springs from the 1988 doctoral dissertation of Stefan Hilger \cite{Hi2} that resulted in his seminal paper \cite{Hi1}. These works aimed to unify various overarching concepts from the (sometimes disparate) theories of discrete and continuous dynamical systems \cite{MiHoLi}, but also to extend these theories to more general classes of dynamical systems. From there, time scales theory advanced fairly quickly, culminating in the excellent introductory text by Bohner and Peterson \cite{BoPe2} and the more advanced monograph \cite{BoPe1}. A succinct survey on time scales can be found in \cite{AgBoORPe}.

\begin{table}
\caption{Canonical time scales compared to the general case.}
\label{comparisons}
\renewcommand{\arraystretch}{1.25}
\begin{tabular}{|>{\centering}m{1.75cm}|>{\centering}m{1.5in}|>{\centering}%
  m{1.5in}|>{\centering\arraybackslash}m{1.5in}|}
  \hline
   & \sf{continuous} & \sf{(uniform) discrete} & \sf{time scale}\\ \hline
  \sf{domain} & $\R$ & $\Z$ & $\T$ \\ \hline
  & \rule{0ex}{3em}\includegraphics[scale=.35]{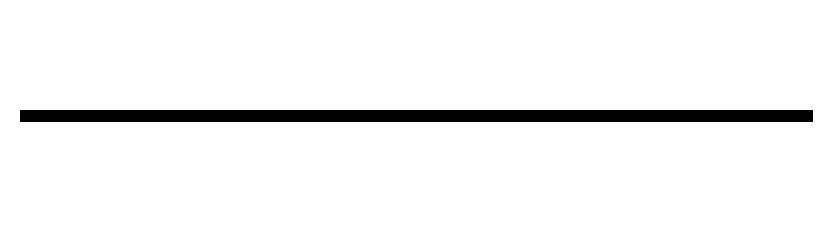} & \includegraphics[scale=.35]{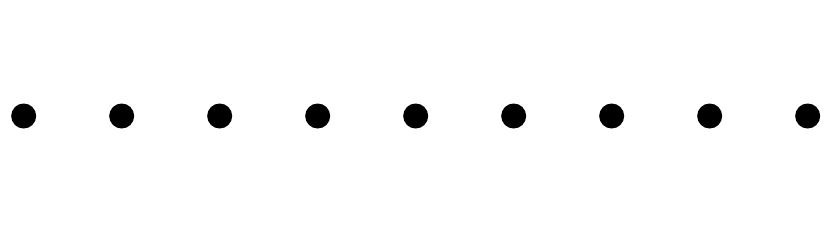} & \includegraphics[scale=.35]{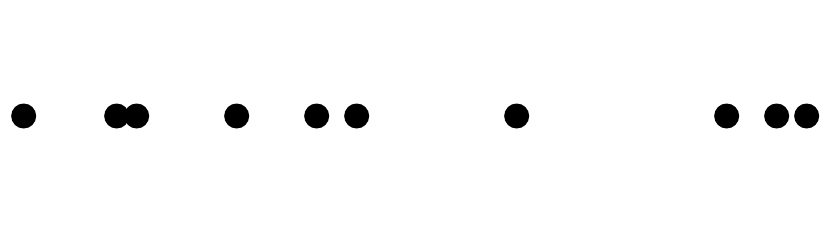}\\ \hline
  \sf{forward jump} & $\sigma(t)\equiv t$ & $\sigma(t)\equiv t+1$ & $\sigma(t)$ varies\\ \hline
  \sf{step size} & $\mu(t)\equiv 0$ & $\mu(t)\equiv 1$ & $\mu(t)$ varies\\ \hline
  \sf{differential operator} & \footnotesize{$\displaystyle\dot{x}(t):=\lim_{h\to 0}{x(t+h)-x(t)\over h}$} & \footnotesize{$\Delta x(t):=x(t+1)-x(t)$} & \footnotesize{$\displaystyle{x^\Delta(t):={x(t+\mu(t))-x(t)\over \mu(t)}}$}\\ \hline
  \sf{canonical equation} & $\dot{x}(t)=Ax(t)$ & $\Delta x(t)=Ax(t)$ & $x^\Delta(t)=Ax(t)$\\ \hline
  \sf{LTI stability region in $\mathbb C$} & \rule{0ex}{9em}\includegraphics[scale=.35]{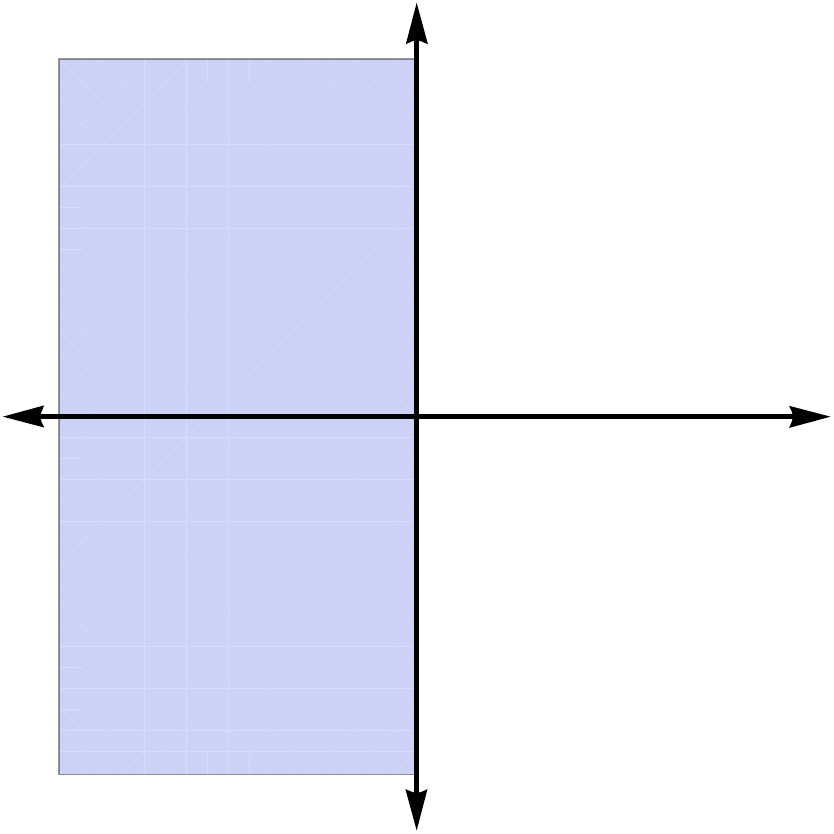}
 & \rule{0ex}{9em}\includegraphics[scale=.35]{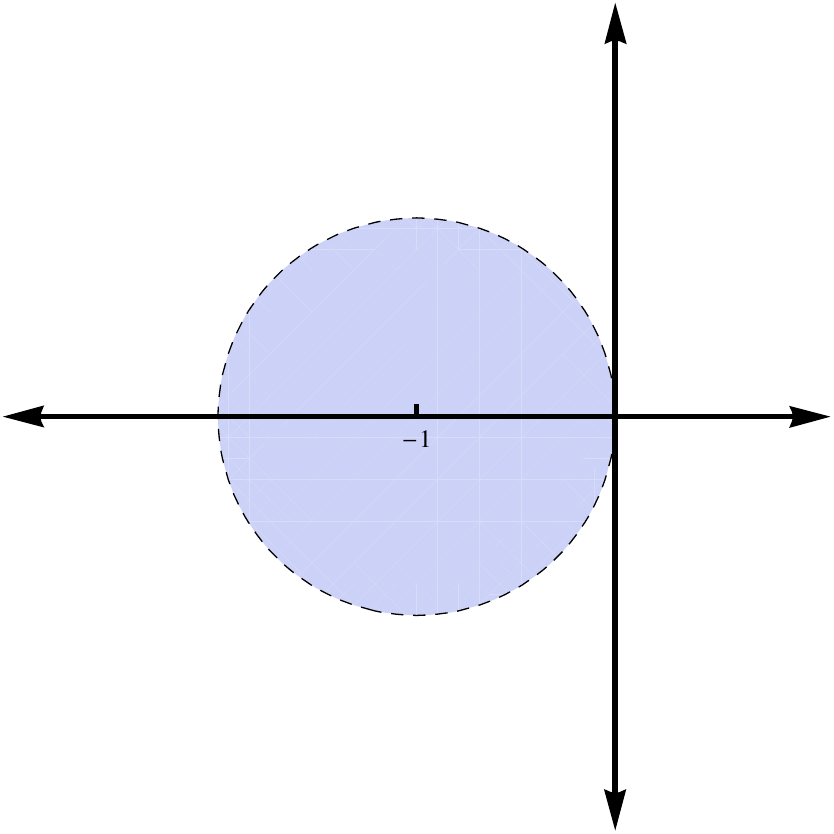} & \rule{0ex}{9em}\includegraphics[scale=.35]{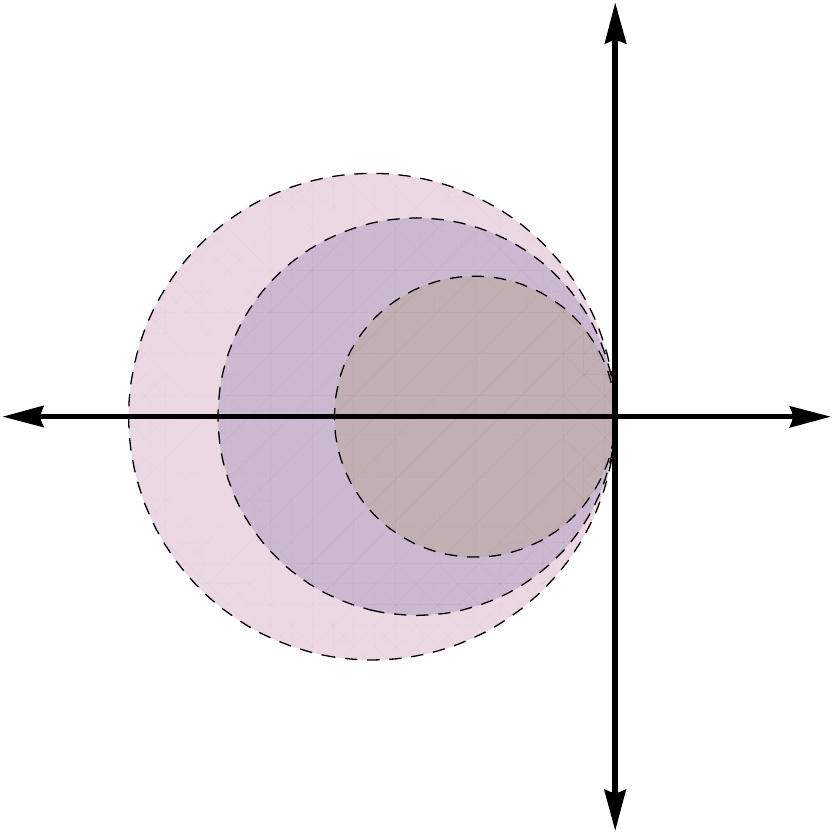}\\ \hline
\end{tabular}
\end{table}

A {\it time scale} $\T$ is any nonempty, (topologically) closed subset of the real numbers $\R$. Thus time scales can be (but are not limited to) any of the usual integer subsets (e.g. $\Z$ or $\N$), the entire real line $\R$, or any combination of discrete points unioned with closed intervals. For example, if $q>1$ is fixed, the {\it quantum time scale} $\overline{q^\Z}$ is defined as
    $$
    \overline{q^\Z}:=\{q^k:k\in\Z\}\cup\{0\}.
    $$
The quantum time scale appears throughout the mathematical physics literature, where the dynamical systems of interest are the $q$-difference equations \cite{Bo,Ca,ChKa}. Another interesting example is the {\it pulse time scale} $\mathbb{P}_{a,b}$ formed by a union of closed intervals each of length $a$ and gap $b$:
    $$
    \mathbb{P}_{a,b}:=\bigcup_k \left[k(a+b),k(a+b)+a\right].
    $$
This time scale is used to study duty cycles of various waveforms. Other examples of interesting time scales include any collection of discrete points sampled from a probability distribution, any sequence of partial sums from a series with positive terms, or even the infamous Cantor set.

The bulk of engineering systems theory to date rests on two time scales, $\R$ and $\Z$ (or more generally $h\Z$, meaning discrete points separated by distance $h$). However, there are occasions when necessity or convenience dictates the use of an alternate time scale. The question of how to approach the study of dynamical systems on time scales then becomes relevant, and in fact the majority of research on time scales so far has focused on expanding and generalizing the vast suite of tools available to the differential and difference equation theorist. We now briefly outline the portions of the time scales theory that are needed for this paper to be as self-contained as is practically possible.

\subsection{The Time Scales Calculus} We now review the time scales calculus needed for the remainder of the paper.

The {\it forward jump operator} is given by $\s(t):=\inf_{s\in\T}\{s>t\}$, while the {\it backward jump operator} is $\rho(t):=\sup_{s\in\T}\{s<t\}$. The {\it graininess function} $\m(t)$ is given by $\m(t):=\s(t)-t$.


A point $t\in\T$ is {\it right-scattered\/} if $\s(t)>t$ and {\it right dense\/} if $\s(t)=t$. A point $t\in\T$ is {\it left-scattered\/} if $\rho(t)<t$ and {\it left dense\/} if $\rho(t)=t$. If $t$ is both left-scattered and right-scattered, we say $t$ is {\it isolated} or {\it discrete}. If $t$ is both left-dense and right-dense, we say $t$ is {\it dense}. The set $\T^\kappa$ is defined as follows: if $\T$ has a left-scattered maximum $m$, then $\T^\kappa=\T-\{m\}$; otherwise, $\T^\kappa=\T$.

For $f:\T\to\R$ and $t\in\T^\kappa$, define $f^\D(t)$ as the number (when it exists), with the property that, for any $\varepsilon > 0$, there exists a neighborhood $U$ of $t$ such that
	\begin{equation}\label{epsdef}
	\left|[f(\sigma(t))-f(s)]-f^\D(t)[\sigma(t)-s]\right|
	\leq\epsilon|\sigma(t)-s|, \quad \forall s\in U.
	\end{equation}
The function $f^\D:\T^\kappa\to\R$ is called the \textit{delta derivative} or the {\it Hilger derivative} of $f$ on $\T^\kappa$. Equivalently, \eqref{epsdef} can be restated to define the $\Delta$-differential operator as
    $$
    x^\Delta(t):={x(\s(t))-x(t)\over \mu(t)},
    $$
where the quotient is taken in the sense that $\m(t)\to 0^+$ when $\m(t)=0$.

\begin{table}
\caption{Differential operators on time scales.}
\label{derivatives}
\renewcommand{\arraystretch}{2}
\begin{tabular}{|c|c|c|}
  \hline
  \sf{time scale} & \sf{differential operator} & \sf{notes}\\ \hline
  $\T$ & $x^\Delta (t)={x(\s(t))-x(t)\over \m(t)}$ & generalized derivative\\ \hline
  $\R$ & $x^\Delta (t)=\lim_{h\to 0}{x(t+h)-x(t)\over h}$ & standard derivative\\ \hline
  $\Z$ & $x^\Delta(t)=\Delta x(t):=x(t+1)-x(t)$ & forward difference\\ \hline
  $h\Z$ & $x^\Delta(t)=\Delta_h x(t):={x(t+h)-x(t)\over h}$ & $h$-forward difference\\ \hline
  $\overline{q^\Z}$ & $x^\Delta(t)=\Delta_q x(t):={x(qt)-x(t)\over (q-1)t}$ & $q$-difference\\ \hline
  ${\mathbb P}_{a,b}$ & $x^\Delta(t)=\begin{cases} {dx\over dt}, & \s(t)=t,\\ {x(t+b)-x(t)\over b}, &\s(t)>t\end{cases}$ & pulse derivative\\ \hline
\end{tabular}
\end{table}

A benefit of this general approach is that the realms of differential equations and difference equations can now be viewed as but special, particular cases of more general {\it dynamic equations on time scales}, i.e. equations involving the delta derivative(s) of some unknown function. See Table~\ref{derivatives}.

Since the graininess function induces a measure on $\T$, if we consider the Lebesgue integral over $\T$ with respect to the $\mu$-induced measure,
    $$
    \int_{\T}f(t)\,d\mu(t),
    $$
then all of the standard results from measure theory are available \cite{Gu}. In particular, under mild technical assumptions on the integrand, we obtain the set of integral operators in Table~\ref{integrals}.

\begin{table}
\caption{Integral operators on time scales.}
\label{integrals}
\renewcommand{\arraystretch}{2}
\begin{tabular}{|c|c|c|}
  \hline
  \sf{time scale} & \sf{integral operator} & \sf{notes}\\ \hline
  $\T$ & $\int_\T f(t)\Delta t$ & generalized integral\\ \hline
  $\R$ & $\int_a^b f(t)\Delta t =\int_a^b f(t)\,dt$ & standard Lebesgue integral\\ \hline
  $\Z$ & $\int_a^b f(t)\Delta t =\sum_{t=a}^{b-1} f(t)$ & summation operator\\ \hline
  $h\Z$ & $\int_a^b f(t)\Delta t =\sum_{t=a}^{b-h} f(t)h$ & $h$-summation \\ \hline
  $\overline{q^\Z}$ & $\int_a^b f(t)\Delta t =\sum_{t=a}^{b/q} {f(t)\over (q-1)t}$ & $q$-summation\\ \hline
\end{tabular}
\end{table}

The upshot here is that the derivative and integral concepts (and all of the concepts in Table~\ref{comparisons}) apply just as readily to {\it any} closed subset of the real line as they do on $\R$ or $\Z$. Our goal is to leverage this general framework against wide classes of dynamical and control systems. Progress in this direction has been made in transforms theory \cite{DaGrJaMaRa, MaGrDa}, control \cite{DaGrJaMa, GrDaDa, GrDaDaMa}, dynamic programming \cite{SeSaWu}, and biological models \cite{HoJa1, HoJa2}.

\subsection{The Hilger Complex Plane}

For $\m>0$, define the \textit{Hilger complex numbers}, the \textit{Hilger real axis}, the \textit{Hilger alternating axis}, and the \textit{Hilger imaginary axis} by
	\begin{alignat*}{2}
	{\mathbb C}_\m&:=\left\{z\in{\mathbb C}:z\neq -\frac{1}{\m}\right\},
	&\quad
	\R_\m&:=\left\{z\in\R:z> -\frac{1}{\m}\right\},\\
	{\mathbb A}_\m&:=\left\{z\in\R:z< -\frac{1}{\m}\right\},
	&\quad
	{\mathbb I}_\m&:=\left\{z\in{\mathbb C}:\left|z+\frac{1}{\m}\right|=\frac{1}{\m}\right\},
	\end{alignat*}
respectively.  For $\m=0$, let
$\mathbb C_0:=\mathbb C,\:\R_0:=\R,\:{\mathbb A}_0:=\emptyset,$ and $\I_0:=i\R$. See Figure~\ref{fig1}.


For $a,b\in{\mathbb C}_\m$, if we define the binary operation $a\oplus b:=a+b+\m a b$, then $(\mathbb C_\m,\oplus)$ forms an abelian group.

Let $\m>0$ and $z\in\mathbb C_\m$.  The \textit{Hilger real part of z} is defined
by
	$$
	\Rem(z):=\frac{|z\m+1|-1}{\m},
	$$
and the \textit{Hilger imaginary part of z} is defined by
	$$
	\Imm(z):=\frac{\text{Arg}(z\m+1)}{\m},
	$$
where $\text{Arg}(z)$ denotes the principal argument of $z$ (i.e., $-\pi<\text{Arg}(z)\leq\pi$). See Figure~\ref{fig1}.

For $\m>0$, define the strip
	$$
	\Z_\m:=\left\{z\in\mathbb C:-\frac{\pi}{\m}<\text{Im}(z)\leq\frac{\pi}{\m}\right\},
	$$
and for $\m=0$, set $\Z_0:=\mathbb C$. Then the {\it cylinder transformation} $\xi_\m:\mathbb C_\m\to\Z_\m$ is given by
	\begin{equation}\label{cyl}
	\xi_\m(z):=\frac{1}{\m}\Log(1+z\m),
	\end{equation}
where $\Log$ is the principal logarithm function.  When $\m=0$, set $\xi_0(z)=z$, for all $z\in\mathbb C$. Then the {\it inverse cylinder transformation} $\xi_\m^{-1}:\Z_\m\to\mathbb C_\m$ is
	\begin{equation}\label{inv_cyl}
	\xi_\m^{-1}(z):={e^{z\m}-1\over \m}.
	\end{equation}
See Figure~\ref{fig1}.

\begin{figure}
	\centering
	\includegraphics[scale=.55]{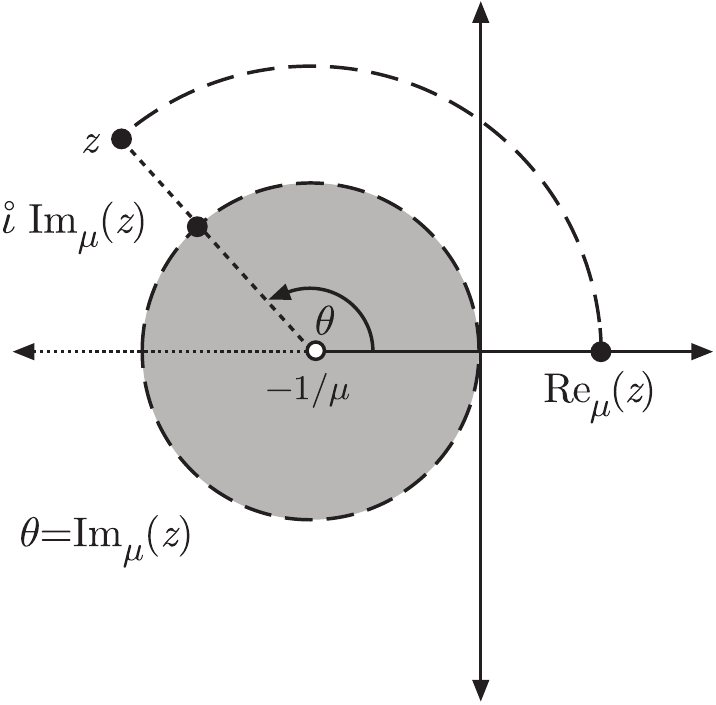}
    \includegraphics[scale=.55]{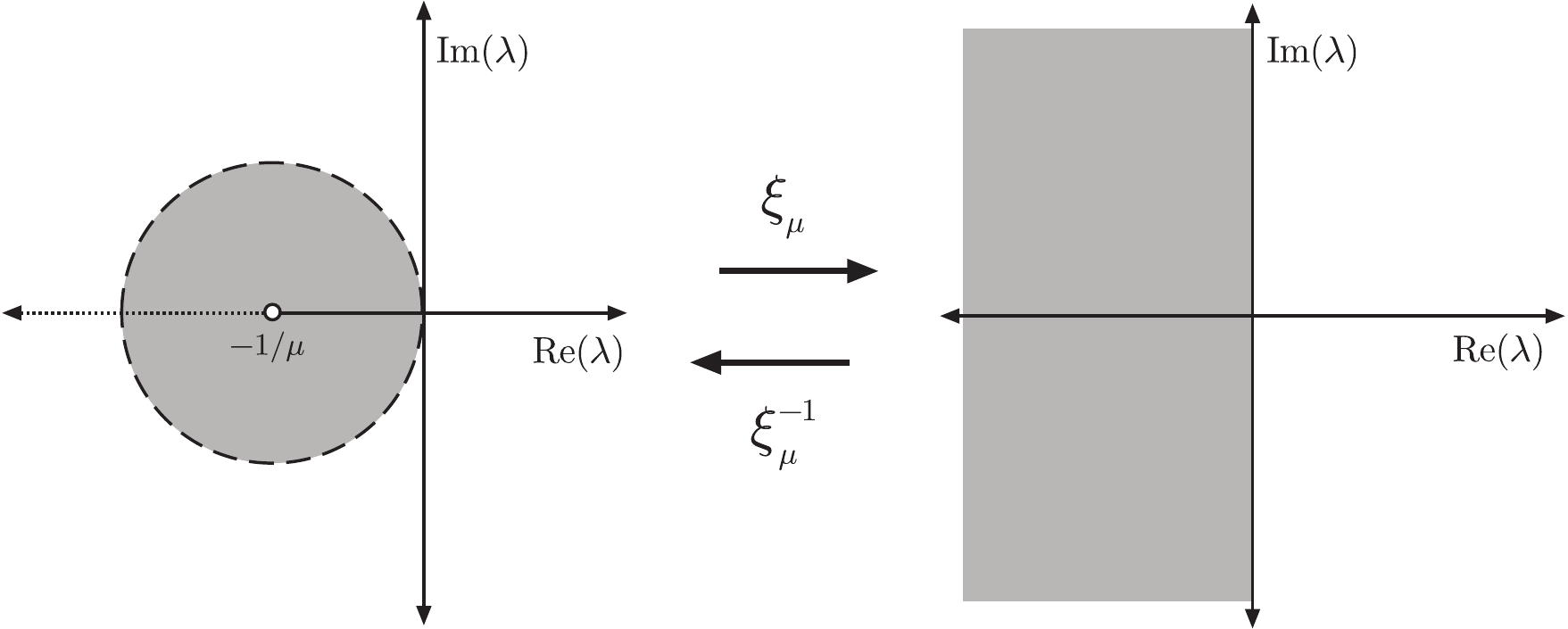}
	\caption{Left: The geometry of the Hilger complex plane. Right: The cylinder \eqref{cyl} and inverse cylinder \eqref{inv_cyl} transformations map the familiar stability region in the continuous case to the Hilger circle in the general time scale case.}
	\label{fig1}
\end{figure}

The region $\Rem z<0$ is naturally important for stability questions involving the linear time invariant system $x^\Delta (t)=Ax(t)$. We call this region the {\it Hilger circle} and denote it by
    $$
    \H:=\H_\m=\{z\in\mathbb C_\m:\Rem(z)<0\}=\{z\in\mathbb C_\m:|1+\m z|<1\}.
    $$
Note that as $\m\to 0^+$, $\H\to\mathbb C^-$, the standard region of exponential stability for the linear time invariant system $\dot{x}=Ax$. On the other hand, as $\m\to 1$, $\H$ becomes the standard region of convergence for the discrete linear time invariant system $\Delta x=Ax$ (shifted one unit to the left due to the difference equation form rather than recursive form of the system). See the bottom row of Table~\ref{comparisons} as well as Figure~\ref{fig1}.

Since the graininess may not be constant for a given time scale, we might interchangeably subscript various quantities (such as $\H$ or $\xi$) with $\m(t)$ instead of $\m$ to emphasize this.


\subsection{Generalized Exponential Functions}

Before we can use the cylinder transformation to define the generalized exponential function on a time scale, we need to define appropriate classes of function spaces on which to work.

A function $p:\T\to\R$ is {\it rd-continuous on $\T$} if $p$ is continuous at right-dense points of $\T$ and has finite left-hand limits at left-dense points of $\T$. We denote this space of functions by $C_\textup{rd}(\T,\R)$. A matrix is rd-continuous provided each of its entries is rd-continuous.

A function $p:\T\to\R$ is \textit{regressive} if $1+\mu(t)p(t)\neq 0$ for all $t\in\T^\kappa$, and this motivates the definition of the following sets: 	
	\begin{gather*}
	\mathcal{R}:=\{p\in C_\textup{rd}(\T,\R) : 1+\m(t)p(t)\not=0\  \forall t\in\T^\kappa\},\\
	\mathcal{R}^+:=\{p\in\mathcal{R}: 1+\mu(t)p(t)>0\ \forall t\in\T^\kappa\}.
	\end{gather*}
A matrix is regressive provided all of its eigenvalues are regressive.

$\mathcal{R}$ under the operation $\oplus$ also forms an abelian group, and the additive inverse of $\oplus$ is denoted by $\ominus$. For $p\in\mathcal R$, $\ominus p\in\mathcal R$ where
    $$
    \ominus p(t):={-p(t)\over 1+\m(t)p(t)}.
    $$

If $p\in\mathcal{R}$, we define the \textit{generalized time
scale exponential function} $e_p(t,s)$ as the unique solution to the dynamic initial value problem
    $$
    x^\Delta(t)=p(t)x(t), \qquad x(s)=1,\qquad t,s\in\T^\kappa.
    $$
It can be shown \cite{BoPe2} that a closed form for $e_p(t,s)$ is given by
	$$
	e_p(t,s)=\text{exp}\(\int^t_s\xi_\m(p(\t))\D\t\)=\exp\left(\int_s^t {\Log(1+\mu(\tau)p(\tau))\over \mu(\tau)}\,\Delta\tau\right),
	$$
where $\xi_\mu$ is the cylinder transformation.

The following theorem is a compilation of properties of $e_p(t,t_0)$ needed later.

\begin{theorem}\label{e_props}
For $p\in\mathcal R$, the function $e_p(t,t_0)$ has the following properties:
\begin{itemize}
    \item[\textup{(i)}] $e_p(t,r)e_p(r,s)=e_p(t,s)$  for all $r,s,t\in\T$.
    \item[\textup{(ii)}] $e_p(t,s)={1\over e_p(s,t)}=e_{\ominus p}(s,t)$
    \item[\textup{(iii)}] $e_p(t,s)e_q(t,s)=e_{p\oplus q}(t,s)$
    \item[\textup{(iv)}] If $q\in\mathcal R$, then ${e_p(t,s)\over e_q(t,s)}=e_{p\ominus q}(t,s)$.
    \item[\textup{(v)}] If $\T=\R$, then
    $e_p(t,s)=\exp\(\int^t_sp(\t)d\t\)$.  If $p$ is
    constant, then $e_p(t,s)=e^{p(t-s)}$.
    \item[\textup{(vi)}]If $\T=\Z$, then
    $e_p(t,s)=\prod^{t-1}_{\t=s}(1+p(\t))$. Moreover, if $\T=h\Z$, with $h>0$ and $p$ is
    constant, then $e_p(t,s)=(1+hp)^{\frac{t-s}{h}}$.
\end{itemize}
\end{theorem}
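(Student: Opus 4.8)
The plan is to use the closed-form representation
$$
e_p(t,s)=\exp\left(\int_s^t \xi_{\mu(\tau)}(p(\tau))\,\Delta\tau\right)
$$
together with additivity of the delta integral to settle identities (i), (v), (vi) and the first equality of (ii) directly, and to prove the homomorphism identity (iii) via the defining dynamic initial value problem and uniqueness of solutions, since direct manipulation of the principal logarithm there runs into branch-cut trouble; identity (iv) and the second equality of (ii) then drop out of (iii).

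For (i) and the first equality of (ii): since $\int_s^t=\int_s^r+\int_r^t$ and $\int_s^t=-\int_t^s$ for all $r,s,t\in\T$, applying $\exp$ to the closed form gives $e_p(t,r)e_p(r,s)=e_p(t,s)$ and $e_p(t,s)=1/e_p(s,t)$. For (v) and (vi) one substitutes the relevant data: on $\R$, $\mu\equiv 0$ and $\xi_0=\mathrm{id}$; on $\Z$ (resp.\ $h\Z$), $\mu\equiv 1$ (resp.\ $h$) and the delta integral is the corresponding sum, so $\exp\big(\sum\Log(1+p(\tau))\big)=\prod(1+p(\tau))$ by $\exp\circ\Log=\mathrm{id}$ --- a step that uses only $\exp(\Log z)=z$ for $z\neq 0$ and so needs no positivity assumption on $1+p(\tau)$.

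The substance is (iii). First record the jump identity $e_p(\sigma(t),s)=(1+\mu(t)p(t))e_p(t,s)$, immediate from $f(\sigma(t))=f(t)+\mu(t)f^\Delta(t)$ and $e_p^\Delta(\cdot,s)=p\,e_p(\cdot,s)$. Setting $y(t):=e_p(t,s)e_q(t,s)$ and using the time-scale product rule $(fg)^\Delta=f^\Delta g+f^\sigma g^\Delta$,
$$
y^\Delta=p\,e_pe_q+(1+\mu p)e_p\cdot q\,e_q=(p+q+\mu pq)\,y=(p\oplus q)\,y,
$$
with $y(s)=1$. Since $(\mathcal{R},\oplus)$ is a group, $p\oplus q\in\mathcal{R}$, so $e_{p\oplus q}(\cdot,s)$ solves the same regressive linear initial value problem; uniqueness forces $y=e_{p\oplus q}(\cdot,s)$.

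Finally, taking $q=\ominus p$ in (iii) and using $p\oplus(\ominus p)=0$ and $e_0(\cdot,s)\equiv 1$ gives $e_{\ominus p}(t,s)=1/e_p(t,s)$, and comparing with the first equality of (ii) yields the second, $e_{\ominus p}(s,t)=1/e_p(s,t)=e_p(t,s)$. For (iv), $p\ominus q:=p\oplus(\ominus q)\in\mathcal{R}$ since $\mathcal{R}$ is closed under $\oplus$ and $\ominus$, whence $e_p/e_q=e_p\cdot e_{\ominus q}=e_{p\oplus(\ominus q)}=e_{p\ominus q}$ by (iii) and (ii). The one genuine pitfall --- and the reason (iii) is done through the dynamic equation rather than the closed form --- is that $\Log\big((1+\mu p)(1+\mu q)\big)=\Log(1+\mu p)+\Log(1+\mu q)$ fails in general (by $2\pi i$ precisely when both factors are negative); routing through uniqueness avoids it, and everything else is bookkeeping with the delta integral and the group $(\mathcal{R},\oplus)$.
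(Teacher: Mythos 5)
Your proof is correct. The paper itself offers no proof of this theorem: it is presented as a compilation of known properties of $e_p(t,s)$ with a citation to Bohner and Peterson \cite{BoPe2}, so there is no in-paper argument to compare against, and your derivation is essentially the standard one from that reference. The one substantive choice you make --- proving (iii) via the product rule $(fg)^\Delta = f^\Delta g + f^\sigma g^\Delta$, the jump identity $e_p(\sigma(t),s)=(1+\mu(t)p(t))\,e_p(t,s)$, and uniqueness for the regressive initial value problem, rather than by adding principal logarithms inside the cylinder transform --- is exactly right, and your observation that $\Log\bigl((1+\mu p)(1+\mu q)\bigr)=\Log(1+\mu p)+\Log(1+\mu q)$ can fail by $2\pi i$ is a genuine subtlety here, since the theorem is stated for $p\in\mathcal{R}$ rather than $\mathcal{R}^+$, so $1+\mu(t)p(t)$ may be negative and the closed form is then $\exp$ of a complex-valued integrand. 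Items (i), (ii) (first equality), (v), and (vi) do follow from additivity of the delta integral and $\exp(\Log z)=z$ as you say, and (iv) and the second equality of (ii) reduce to (iii) via the group structure of $(\mathcal{R},\oplus)$; no gaps.
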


%
%

\section{Linear State Feedback}

Let $A\in \R^{n\times n}$, $B\in \R^{n\times m}$, and $C\in \R^{p\times n}$ be rd-continuous on $\T$ with $p,m\le n$, and consider the open-loop state equation
    \begin{align*}
    x^{\Delta}(t)&=A(t)x(t)+B(t)u(t), \quad x(t_0)=x_0,\\
    y(t)&=C(t)x(t).
    \end{align*}
In the presence of a linear state feedback controller, we replace the input $u(t)$ above with $u(t):=K(t)x(t)+N(t)r(t)$,
where $r(t)$ represents a new input signal, and $K(t) \in \R^{m \times n}$, $N(t) \in R^{m \times m}$ are rd-continuous. The corresponding closed-loop system is
    \begin{align*}
    x^{\Delta}(t)&=\left[A(t)+B(t)K(t)\right]x(t)+B(t)N(t)r(t),\quad x(t_0)=x_0, \\
    y(t)&=C(t)x(t).
    \end{align*}
Without loss of generality, we proceed with $r(t)\equiv 0$.

\begin{definition}\cite{DaGrJaMa}
Let $A(t)\in\R^{n \times n}, B(t)\in \R^{n \times m}, C(t) \in \R^{p \times n},$ and $D(t)\in \R^{p \times m}$ all be rd-continuous functions on $\T$, with $p,m \leq n$.  The regressive linear system
    \begin{equation}\label{csystem}
    \begin{aligned}
    x^{\Delta}(t) &=A(t)x(t)+B(t)u(t), \quad x(t_0)=x_0,\\
    y(t) &=C(t)x(t) + D(t)u(t),
    \end{aligned}
    \end{equation}
is {\em controllable on $[t_0,t_f]$} if given any initial
state $x_0$ there exists a rd-continuous input signal $u(t)$ such
that the corresponding solution of the system satisfies
$x(t_f)=x_f$.
\end{definition}

Our first result establishes that a necessary and sufficient condition for controllability of the linear system \eqref{csystem} is the invertibility of an associated Gramian matrix.

\begin{theorem}\cite{DaGrJaMa}
\label{controlgramian}
The regressive linear system
    \begin{align*}
    x^{\Delta}(t) &= A(t)x(t)+B(t)u(t), \quad x(t_0)=x_0,\\
    y(t) &= C(t)x(t) + D(t)u(t),
    \end{align*}
is controllable on $[t_0,t_f]$ if and only if the $n\times n$ controllability Gramian matrix given by
    \begin{equation*}\label{g}
    \mathscr{G}_{C}(t_0,t_f):=\int_{t_0}^{t_f}\Phi_{A}(t_0,\sigma(t))B(t)B^{T}(t)\Phi^{T}_{A}(t_0,\sigma(t))\,\Delta t,
    \end{equation*}
is invertible, where $\Phi_Z(t,t_0)$ is the transition matrix for the system $X^\Delta(t) =Z(t)X(t)$, $X(t_0)=I$.
\end{theorem}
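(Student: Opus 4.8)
The plan is to reduce controllability to a surjectivity statement via the time-scales variation of parameters formula, and then settle the two implications by the classical Gramian argument. First I would invoke the variation of parameters formula for regressive linear dynamic systems (see \cite{BoPe2}): the solution of $x^\Delta(t)=A(t)x(t)+B(t)u(t)$ with $x(t_0)=x_0$ satisfies
$$
x(t_f)=\Phi_A(t_f,t_0)x_0+\int_{t_0}^{t_f}\Phi_A(t_f,\sigma(s))B(s)u(s)\,\Delta s.
$$
Using the invertibility of the transition matrix, $\Phi_A(t_f,t_0)^{-1}=\Phi_A(t_0,t_f)$, together with the composition law $\Phi_A(t_f,\sigma(s))=\Phi_A(t_f,t_0)\Phi_A(t_0,\sigma(s))$, the requirement $x(t_f)=x_f$ is equivalent to
$$
\int_{t_0}^{t_f}\Phi_A(t_0,\sigma(s))B(s)u(s)\,\Delta s=\Phi_A(t_0,t_f)x_f-x_0.
$$
As $x_0$ and $x_f$ range over $\R^n$ and $\Phi_A(t_0,t_f)$ is a bijection, the right-hand side is an arbitrary vector in $\R^n$; hence the system is controllable on $[t_0,t_f]$ if and only if the linear map $L\colon u\mapsto \int_{t_0}^{t_f}\Phi_A(t_0,\sigma(s))B(s)u(s)\,\Delta s$ is onto $\R^n$.

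For the ``if'' direction, assume $\mathscr{G}_C(t_0,t_f)$ is invertible and set $\eta:=\Phi_A(t_0,t_f)x_f-x_0$. The rd-continuous control
$$
u(t):=B^T(t)\Phi_A^T(t_0,\sigma(t))\,\mathscr{G}_C^{-1}(t_0,t_f)\,\eta
$$
substituted into $L$ collapses the integral to $\mathscr{G}_C(t_0,t_f)\mathscr{G}_C^{-1}(t_0,t_f)\eta=\eta$, so $x(t_f)=x_f$ and the system is controllable. For the ``only if'' direction I would argue by contraposition: if $\mathscr{G}_C(t_0,t_f)$ is singular, choose $v\in\R^n$, $v\neq 0$, with $\mathscr{G}_C(t_0,t_f)v=0$; then
$$
0=v^T\mathscr{G}_C(t_0,t_f)v=\int_{t_0}^{t_f}\bigl\|B^T(s)\Phi_A^T(t_0,\sigma(s))v\bigr\|^2\,\Delta s.
$$
Since the integrand is nonnegative and rd-continuous, it vanishes for $\mu$-almost every $s\in[t_0,t_f)$, whence $v^T\Phi_A(t_0,\sigma(s))B(s)=0$ there; consequently $v^TL(u)=0$ for every input $u$, so $v$ is orthogonal to the range of $L$, $L$ is not onto, and the system fails to be controllable.

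The substitutions and the explicit feedback control are routine; the one step requiring genuine care is the measure-theoretic assertion in the ``only if'' part, namely that a nonnegative rd-continuous function whose $\Delta$-integral over $[t_0,t_f]$ vanishes must vanish $\mu$-almost everywhere. On right-dense portions of $[t_0,t_f)$ this is the familiar fact about continuous nonnegative functions with zero integral, while at a right-scattered point $t$ the $\Delta$-integral picks up exactly the term $\mu(t)\bigl\|B^T(t)\Phi_A^T(t_0,\sigma(t))v\bigr\|^2\ge 0$, which is therefore zero; combining the two cases is immediate from the Lebesgue theory of the $\mu$-measure already in force, and this is really the only place where the time-scales structure (as opposed to the $\R$ or $\Z$ template) demands any extra thought.
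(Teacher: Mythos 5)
Your argument is correct, and it is the standard Gramian construction: reduce controllability to surjectivity of $u\mapsto\int_{t_0}^{t_f}\Phi_A(t_0,\sigma(s))B(s)u(s)\,\Delta s$ via variation of parameters, exhibit the explicit control $u(t)=B^T(t)\Phi_A^T(t_0,\sigma(t))\mathscr{G}_C^{-1}(t_0,t_f)\eta$ for sufficiency, and use a null vector of the Gramian for necessity. The paper itself gives no proof here (the theorem is quoted from \cite{DaGrJaMa}), and your proof matches the one in that reference in all essentials, including the correct handling of the vanishing nonnegative rd-continuous integrand at right-dense versus right-scattered points.
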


Note that as  $\m\to 0^+$, this time scale version of the controllability Gramian matches the standard controllability Gramian on $\R$, and as $\m\to 1$ it matches the standard controllability Gramian on $\Z$ (modulo the unit shift).

\begin{lemma}\label{higcircclosed}
The Hilger circle $\mathcal H$ is closed under the operation $\oplus$ for
all $t \in \T$.
\end{lemma}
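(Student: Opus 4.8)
The plan is to work from the ``disk'' description of the Hilger circle rather than the Hilger-real-part description, i.e. to use
$$
\mathcal H_\mu = \{z\in\mathbb C_\mu : |1+\mu z|<1\}
$$
for a fixed $t\in\T$ with $\mu=\mu(t)$. The entire argument hinges on one algebraic identity: if $a,b\in\mathbb C_\mu$, then expanding the definition $a\oplus b = a+b+\mu ab$ gives
$$
1+\mu(a\oplus b) = 1+\mu a+\mu b+\mu^2 ab = (1+\mu a)(1+\mu b).
$$
So first I would record this identity, which is just a one-line expansion.

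Next, assume $a,b\in\mathcal H_\mu$, so $|1+\mu a|<1$ and $|1+\mu b|<1$. Taking absolute values in the identity above yields
$$
|1+\mu(a\oplus b)| = |1+\mu a|\,|1+\mu b| < 1\cdot 1 = 1,
$$
which is exactly the condition for $a\oplus b$ to lie in $\mathcal H_\mu$. Along the way one also sees that $1+\mu(a\oplus b)=(1+\mu a)(1+\mu b)\neq 0$, so $a\oplus b\neq -1/\mu$ and hence $a\oplus b\in\mathbb C_\mu$ in the first place; this is the small bookkeeping point needed so that the claim ``$a\oplus b\in\mathcal H_\mu$'' even makes sense.

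Finally I would dispatch the degenerate case $\mu(t)=0$ separately: there $\oplus$ reduces to ordinary addition and $\mathcal H_0=\mathbb C^-$, so $\myRe(a\oplus b)=\myRe(a)+\myRe(b)<0$ and closure is immediate. Since the graininess need not be constant, the statement ``for all $t\in\T$'' is handled simply by carrying out the above with $\mu=\mu(t)$ at each point. I do not anticipate a genuine obstacle here — the only thing to be careful about is not to phrase the argument in terms of $\Rem$ (where the triangle-inequality bookkeeping is messier) when the multiplicative disk description makes the proof essentially a single line; the ``hard part,'' such as it is, is just noticing that $1+\mu(a\oplus b)$ factors.
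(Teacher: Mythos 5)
Your proof is correct and is essentially the paper's own argument: the paper parametrizes points of $\H$ as $a=\frac{\alpha-1}{\mu}$ with $|\alpha|<1$ (i.e.\ $\alpha=1+\mu a$) and verifies that $\gamma=\alpha\beta$ works, which is exactly your factorization $1+\mu(a\oplus b)=(1+\mu a)(1+\mu b)$. Your separate treatment of the $\mu(t)=0$ case is a small tidiness improvement the paper omits, but not a different approach.
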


\begin{proof}
Let $\alpha \in {\mathbb C}$ be such that $|\alpha|<1$.  Then for a given
graininess $\mu$, the number $a=\frac{\alpha-1}{\mu} \in \H$.
Similarly, let $\beta \in {\mathbb C}$ be such that $|\beta|<1$, so that
$b=\frac{\beta-1}{\mu}\in \H$.  We set $$c:=a \oplus b =a+b+\mu
ab.$$Now, $c \in \H$ if there exists a $\gamma \in {\mathbb C}$ such that
$|\gamma|<1$ with $c=\frac{\gamma-1}{\mu}$.  We claim that the
choice $\gamma=\alpha \beta$ will suffice, from which the conclusion
follows immediately.

Indeed, with this choice of $\gamma$,
    $$
    \frac{\gamma-1}{\mu}=\frac{\alpha-1}{\mu}+\frac{\beta-1}{\mu}
    +\mu\frac{\alpha-1}{\mu}\frac{\beta-1}{\mu},
    $$
and since $|\gamma|=|\alpha| \cdot |\beta| <1$, the claim follows.
\end{proof}

Unlike on $\R$ or $\Z$, in the general time scales setting, there are various ways one could legitimately define {\it exponential stability}. P\"otzsche, Siegmund, and Wirth \cite{PSW} first did so by bounding the state vector above by a decaying {\it regular} exponential function. DaCunha \cite{Da3} generalized their definition by allowing the state vector to be bounded above by a {\it time scale} exponential function of the form $e_{-\lambda}(t,t_0)$. Even more recently, various authors \cite{Li,PeRa} have used a time scale exponential of the form $e_{\ominus \lambda}(t,t_0)$.

In this paper, we adopt DaCunha's definition, but these other definitions could also be used by modifying our arguments slightly.

\begin{definition} \cite{Da3}
The regressive linear state equation
    \begin{align*}
    x^{\Delta}(t) & =A(t)x(t)+B(t)u(t), \qquad x(t_0)=x_0, \\
    y(t) & =C(t)x(t),
    \end{align*}
is {\em uniformly exponentially stable with rate
$\lambda>0$}, where $-\lambda \in \mathcal{R}^+$,
if there exists a constant $\gamma>0$ such that for any $t_0\in\T$ and
$x_0$ the corresponding solution satisfies
    $$
    \|x(t)\| \leq \gamma e_{-\lambda}(t,t_0) \|x_0\|, \qquad t \geq t_0.
    $$
\end{definition}

\begin{lemma}[{\bf Stability Under State Variable Change}]
\label{expstablem}
The regressive linear state equation
    \begin{align*}
    x^{\Delta}(t) & =  A(t)x(t)+B(t)u(t), \qquad x(t_0)=x_0, \\
    y(t) & =  C(t)x(t),
    \end{align*}
is uniformly exponentially
stable with rate $\frac{\lambda +\alpha}{1+\mu_{\max}\alpha}$, where
$\lambda,\alpha>0$ such that $-\lambda \in
\mathcal{R}^+$, if the linear state equation
    $$
    z^{\Delta}(t)=[A(t)(1+\mu\alpha)+\alpha I]z(t),
    $$
is uniformly exponentially stable with rate $\lambda$.
\end{lemma}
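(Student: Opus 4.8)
The plan is to reduce the claim to the homogeneous equation via an explicit change of state variable, then transfer the exponential bound. Since uniform exponential stability is a property of the unforced equation, take $u\equiv 0$ and let $x(\cdot)$ solve $x^\Delta(t)=A(t)x(t)$, $x(t_0)=x_0$. Define $z(t):=e_\alpha(t,t_0)\,x(t)$; as $\alpha>0$ is a constant, $\alpha\in\mathcal R^+$ and $e_\alpha(t,t_0)>0$ for all $t$. The first step is to compute $z^\Delta$. Using the product rule $(fg)^\Delta=f^\Delta g+f^\sigma g^\Delta$ with $f=e_\alpha(\cdot,t_0)$ and $g=x$, together with $e_\alpha(\cdot,t_0)^\Delta=\alpha\,e_\alpha(\cdot,t_0)$ and the one-step identity $e_\alpha(\sigma(t),t_0)=e_\alpha(\sigma(t),t)e_\alpha(t,t_0)=(1+\mu(t)\alpha)e_\alpha(t,t_0)$ (Theorem~\ref{e_props}(i)), one obtains
$$
z^\Delta(t)=\alpha\,e_\alpha(t,t_0)x(t)+(1+\mu(t)\alpha)e_\alpha(t,t_0)A(t)x(t)=\bigl[A(t)(1+\mu(t)\alpha)+\alpha I\bigr]z(t),
$$
so $z(\cdot)$ satisfies exactly the hypothesized $z$-equation, with $z(t_0)=x_0$.

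Next, invoke the hypothesis: there is $\gamma>0$ with $\|z(t)\|\le\gamma\,e_{-\lambda}(t,t_0)\|z(t_0)\|=\gamma\,e_{-\lambda}(t,t_0)\|x_0\|$ for $t\ge t_0$. Since $x(t)=z(t)/e_\alpha(t,t_0)$, parts (ii) and (iv) of Theorem~\ref{e_props} give
$$
\|x(t)\|\le\gamma\,\frac{e_{-\lambda}(t,t_0)}{e_\alpha(t,t_0)}\,\|x_0\|=\gamma\,e_{(-\lambda)\ominus\alpha}(t,t_0)\,\|x_0\|,\qquad (-\lambda)\ominus\alpha=-\frac{\lambda+\alpha}{1+\mu(t)\alpha}.
$$
It remains to pass from the $t$-dependent rate $\tfrac{\lambda+\alpha}{1+\mu(t)\alpha}$ to the uniform rate $\lambda^\star:=\tfrac{\lambda+\alpha}{1+\mu_{\max}\alpha}$, which is where the only genuine care is needed (and where we use that $\T$ has bounded graininess, $\mu_{\max}<\infty$). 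Because $\alpha>0$ and $0\le\mu(t)\le\mu_{\max}$, we have $\tfrac{\lambda+\alpha}{1+\mu(t)\alpha}\ge\lambda^\star$, i.e. $(-\lambda)\ominus\alpha\le-\lambda^\star\le 0$ pointwise. Since $\xi_\mu$ is increasing on the real axis — equivalently, $p\le q$ pointwise implies $e_p(t,t_0)\le e_q(t,t_0)$ for $t\ge t_0$ via the integral representation of the exponential — we conclude $e_{(-\lambda)\ominus\alpha}(t,t_0)\le e_{-\lambda^\star}(t,t_0)$, hence $\|x(t)\|\le\gamma\,e_{-\lambda^\star}(t,t_0)\|x_0\|$, the asserted bound with rate $\lambda^\star$.

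Finally one checks that $-\lambda^\star\in\mathcal R^+$, i.e. $1-\mu(t)\lambda^\star>0$; a short rearrangement shows this is equivalent to $1-\mu(t)\lambda>0$, which is exactly the standing hypothesis $-\lambda\in\mathcal R^+$. The main obstacle is therefore not conceptual but a matter of careful bookkeeping with the graininess: verifying the pointwise inequality between the two rates, the monotonicity of the time-scale exponential in its parameter, and the regressivity of the new rate. Everything else is the direct product-rule computation above.
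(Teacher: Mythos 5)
Your proof is correct and follows essentially the same route as the paper's: the change of variable $z(t)=e_\alpha(t,t_0)x(t)$, the product-rule verification that $z$ satisfies the hypothesized equation, and the transfer of the bound via $e_{(-\lambda)\ominus\alpha}(t,t_0)\le e_{-(\lambda+\alpha)/(1+\mu_{\max}\alpha)}(t,t_0)$. You simply make explicit two points the paper leaves terse --- the monotonicity of $\xi_\mu$ justifying that last comparison, and the regressivity check for the new rate --- which is a welcome tightening rather than a different argument.
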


\begin{proof}
By direct calculation, $x(t)$ satisfies
    $$
    x^{\Delta}(t)=A(t)x(t), \qquad x(t_0)=x_0,
    $$
if and only if $z(t)=e_{\alpha}(t,t_0)x(t)$ satisfies
    \begin{align}
    \label{varchange}
    z^{\Delta}(t)=[A(t)(1+\mu \alpha)+\alpha I]z(t),\qquad z(t_0)=x_0.
    \end{align}
Now assume there exists a $\gamma>0$ such that for any $x_0$ and
$t_0$ the solution of \eqref{varchange} satisfies $$\|z(t)\| \leq
\gamma e_{-\lambda}(t,t_0)\|x_0\|, \, \, \, t \geq t_0.$$Then,
substituting for $z(t)$ yields
    $$
    \|e_{\alpha}(t,t_0)x(t)\|=e_{\alpha}(t,t_0)\|x(t)\| \leq
\gamma e_{-\lambda}(t,t_0)\|x_0\|,$$ so that $$\|x(t)\| \leq
\gamma e_{-\lambda \ominus \alpha}(t,t_0) \leq \gamma
e_{-(\lambda+\alpha)/(1+\mu_{\max}\alpha)}(t,t_0).
    $$
An application of Lemma~\ref{higcircclosed} then gives the result.
\end{proof}

\begin{theorem}[{\cite[Thm.~1.23]{Ja}; cf.~\cite[Thm.~3.2]{Da3}}]
\label{stabilitycrit}
Suppose $A(t)\in \mathcal{R}(\T,\R^{n \times n})$. The regressive time varying linear dynamic system
    $$
    x^{\Delta}(t)=A(t)x(t), \quad x(t_0)=x_0,
    $$
is uniformly exponentially
stable if there exists a symmetric matrix $Q(t)\in C_{{\rm
rd}}^{1}(\T,\R^{n \times n})$ such that for all $t \in \T$
\begin{enumerate}
\item[(i)]$\eta I \leq Q(t) \leq \rho I$,
\item[(ii)] $\left[(I+\mu(t)A^T(t))Q(\sigma(t))(I+\mu(t)A(t))-Q(t)\right] / \mu(t) \leq -\nu I$,
\end{enumerate}
where $\nu,\eta, \rho>0$ and $-\frac{\nu}{\rho}\in \mathcal{R}^+$.
\end{theorem}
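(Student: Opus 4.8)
The plan is to run the classical Lyapunov argument inside the time scales calculus. Fix a solution $x(t)$ of $x^{\Delta}(t)=A(t)x(t)$ with $x(t_0)=x_0$, and put $V(t):=x(t)^{T}Q(t)x(t)$. Since $A\in\mathcal{R}(\T,\R^{n\times n})$ and $Q\in C^{1}_{\mathrm{rd}}$, the scalar function $V$ is delta-differentiable, and the whole statement reduces to showing that $V$ — hence $\|x\|^{2}$, via hypothesis (i) — is dominated by a decaying time scale exponential.

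The first step is the identity
\[
V^{\Delta}(t)=x(t)^{T}\,\frac{\bigl(I+\mu(t)A^{T}(t)\bigr)Q(\sigma(t))\bigl(I+\mu(t)A(t)\bigr)-Q(t)}{\mu(t)}\,x(t),
\]
which I would obtain at right-scattered $t$ directly from $V^{\Delta}=(V^{\sigma}-V)/\mu$ together with $x^{\sigma}=(I+\mu A)x$, and at right-dense $t$ from the ordinary product rule $\dot V=x^{T}(A^{T}Q+Q^{\Delta}+QA)x$ — which is exactly the $\mu\to 0^{+}$ limit of the displayed quotient, i.e. the same convention under which hypothesis (ii) is written. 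The matrix in brackets is symmetric, so (ii) immediately gives $V^{\Delta}(t)\le-\nu\|x(t)\|^{2}$; combining this with the upper bound $Q(t)\le\rho I$ from (i), i.e. $\|x(t)\|^{2}\ge V(t)/\rho$, produces the scalar dynamic inequality $V^{\Delta}(t)\le-\tfrac{\nu}{\rho}V(t)$.

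The second step integrates this inequality, and here the hypothesis $-\nu/\rho\in\mathcal{R}^{+}$ is used crucially: it makes $e_{-\nu/\rho}(\cdot,t_0)$ strictly positive, so the quotient rule gives
\[
\Bigl(\tfrac{V(t)}{e_{-\nu/\rho}(t,t_0)}\Bigr)^{\Delta}=\frac{V^{\Delta}(t)+\tfrac{\nu}{\rho}V(t)}{e_{-\nu/\rho}(\sigma(t),t_0)}\le 0,
\]
whence $V(t)/e_{-\nu/\rho}(t,t_0)$ is nonincreasing by the standard monotonicity test, so $V(t)\le e_{-\nu/\rho}(t,t_0)V(t_0)$ for $t\ge t_0$. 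Sandwiching with (i), $\eta\|x(t)\|^{2}\le V(t)$ and $V(t_0)\le\rho\|x_0\|^{2}$, yields $\|x(t)\|^{2}\le\tfrac{\rho}{\eta}\,e_{-\nu/\rho}(t,t_0)\|x_0\|^{2}$. Taking square roots and applying the elementary bound $\sqrt{e_{p}(t,t_0)}\le e_{p/2}(t,t_0)$ for $p\in\mathcal{R}^{+}$ (which reduces to $1+\mu p\le(1+\mu p/2)^{2}$ after taking $\Log$ and dividing by $\mu$) gives
\[
\|x(t)\|\le\sqrt{\rho/\eta}\;e_{-\nu/(2\rho)}(t,t_0)\,\|x_0\|,\qquad t\ge t_0;
\]
since $-\nu/(2\rho)\in\mathcal{R}^{+}$ as well, this is exactly uniform exponential stability, with rate $\lambda=\nu/(2\rho)$ and constant $\gamma=\sqrt{\rho/\eta}$.

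The only genuinely delicate point is the $V^{\Delta}$ identity: one must cover right-dense and right-scattered points with a single formula and check that the delta product rule collapses precisely to the symmetric quotient appearing in (ii). Everything after that is short, the key observation being that the hypothesis $-\nu/\rho\in\mathcal{R}^{+}$ is exactly what keeps the comparison exponential positive (and nonincreasing), so that both the quotient-rule comparison and the square-root estimate go through cleanly.
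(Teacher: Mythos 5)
The paper states this theorem without proof (it is imported from \cite{Ja} and \cite{Da3}), and your argument is correct and is essentially the standard quadratic-Lyapunov proof used in those sources: the identity $V^{\Delta}(t)=x^{T}\bigl\{\bigl[(I+\mu A^{T})Q^{\sigma}(I+\mu A)-Q\bigr]/\mu\bigr\}x$ for $V=x^{T}Qx$, the comparison $V^{\Delta}\le-(\nu/\rho)V$, the Gronwall step resting on the positivity of $e_{-\nu/\rho}(\cdot,t_0)$ supplied by $-\nu/\rho\in\mathcal{R}^{+}$, and the sandwich with (i). The delicate points are all handled correctly — the right-dense/right-scattered case split agrees with the general product rule, and the estimate $\sqrt{e_{p}(t,t_0)}\le e_{p/2}(t,t_0)$ for $t\ge t_0$ legitimately converts the bound on $\|x\|^{2}$ into uniform exponential stability with rate $\nu/(2\rho)$.
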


In order to achieve the desired stabilization result, we need to
define a weighted version of the controllability Gramian.
For $\alpha>0$ define the {\it $\alpha$-weighted controllability Gramian matrix} $\G_{C_\alpha}(t_0,t_f)$ by
    $$
    \G_{C_\alpha}(t_0,t_f) :=
    \int_{t_0}^{t_f}(e_{\alpha}(t_0,s))^4\Phi_{A}(t_0,\sigma(s))B(s)B^T(s)\Phi_{A}^T(t_0,\sigma(s))\Delta
    s.
    $$

We are now in position to prove the main result of the paper.

\begin{theorem}[{\bf Gramian Exponential Stability Criterion}]
\label{gramexpstab}
Consider the regressive linear state equation
    \begin{align*}
    x^{\Delta}(t) & =  A(t)x+B(t)u(t), \qquad x(t_0)=x_0, \\
    y(t) & =  C(t)x(t),
    \end{align*}
on a time scale $\T$ such that $\mu_{\min}\le \mu(t)\le\mu_{\max}$ for all $t\in\T$. Suppose there exist constants
$\varepsilon_1,\varepsilon_2>0$ and a strictly increasing function
$\C:\T \to \T$ such that $0<\C(t)-t \leq M$ holds for some constant $0<M<\infty$ and all $t \in \T$ with
    \begin{equation}
    \label{bound}
    \varepsilon_1 I \leq \G_{C}(t,\C(t)) \leq
    \varepsilon_2 I, \quad\text{for all }t\in\T.
    \end{equation}
Then given $\alpha>0$, the state feedback gain
    \begin{equation}
    \label{gain}
    K(t):=-B^T(t)(I+\mu(t)A^T(t))^{-1}\G_{C_\alpha}^{-1}(t,\C(t)),
    \end{equation}
has the property that the resulting closed-loop state equation is uniformly
exponentially stable with rate $\alpha$. We call $\C(t)$ the {\em controllability window} for the problem.
\end{theorem}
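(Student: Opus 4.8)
\emph{Proof strategy.} I would use the inverse of the $\alpha$-weighted Gramian as a Lyapunov matrix for a rate-shifted copy of the closed-loop system, fed into the Lyapunov criterion Theorem~\ref{stabilitycrit}. First note that the gain \eqref{gain} is well defined once $\G_{C_\alpha}(t,\C(t))$ is invertible, which the next paragraph establishes. Applying Lemma~\ref{expstablem} with $A(t)+B(t)K(t)$ in the role of $A(t)$, it suffices to show that the shifted equation
    $$
    z^{\Delta}(t)=\bar A(t)\,z(t),\qquad \bar A(t):=\big[A(t)+B(t)K(t)\big]\big(1+\mu(t)\alpha\big)+\alpha I,
    $$
is uniformly exponentially stable at a rate $\lambda$ with $(\lambda+\alpha)/(1+\mu_{\max}\alpha)\ge\alpha$; since $\lambda'\mapsto e_{-\lambda'}(t,t_{0})$ is nonincreasing for $t\ge t_{0}$, a decay estimate at any rate $\ge\alpha$ is in particular one at rate $\alpha$. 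For the shifted equation I would apply Theorem~\ref{stabilitycrit} with $Q(t):=\G_{C_\alpha}^{-1}(t,\C(t))$.

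Condition~(i) is the routine half. For $s\in[t,\C(t)]$ one has $0\le\C(t)-t\le M$ and $0\le\mu(s)\le\mu_{\max}$, so $\xi_{\mu(s)}(\alpha)=\mu(s)^{-1}\Log(1+\mu(s)\alpha)\in(0,\alpha]$ and hence $e^{-M\alpha}\le e_{\alpha}(t,s)\le1$; therefore $e^{-4M\alpha}\,\G_{C}(t,\C(t))\le\G_{C_\alpha}(t,\C(t))\le\G_{C}(t,\C(t))$ in the Loewner order, which with \eqref{bound} gives $e^{-4M\alpha}\varepsilon_{1}I\le\G_{C_\alpha}(t,\C(t))\le\varepsilon_{2}I$. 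In particular $\G_{C_\alpha}(t,\C(t))$ is invertible and $\tfrac{1}{\varepsilon_{2}}I\le Q(t)\le\tfrac{e^{4M\alpha}}{\varepsilon_{1}}I$, which is~(i) with $\eta=1/\varepsilon_{2}$ and $\rho=e^{4M\alpha}/\varepsilon_{1}$.

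The crux is condition~(ii). The first move is a one-step dynamic identity for the weighted Gramian over its sliding window: splitting the defining integral at $\sigma(t)$ — legitimate since $\C(t)\ge\sigma(t)$, because $\C(t)\in\T$ with $\C(t)>t$ — and using $\Phi_{A}(t,\sigma(t))=(I+\mu(t)A(t))^{-1}$, $\Phi_{A}(\sigma(t),\sigma(s))=(I+\mu(t)A(t))\Phi_{A}(t,\sigma(s))$, $e_{\alpha}(t,s)=e_{\alpha}(t,\sigma(t))e_{\alpha}(\sigma(t),s)$, $e_{\alpha}(t,\sigma(t))=(1+\mu(t)\alpha)^{-1}$, and $e_{\alpha}(t,t)=1$, a short computation gives
    \begin{align*}
    \G_{C_\alpha}(t,\C(t))&=\mu(t)\,(I+\mu(t)A(t))^{-1}B(t)B^{T}(t)(I+\mu(t)A^{T}(t))^{-1}\\
    &\quad+(1+\mu(t)\alpha)^{-4}(I+\mu(t)A(t))^{-1}\,\G_{C_\alpha}(\sigma(t),\C(t))\,(I+\mu(t)A^{T}(t))^{-1},
    \end{align*}
where $\G_{C_\alpha}(\sigma(t),\C(t))\le\G_{C_\alpha}(\sigma(t),\C(\sigma(t)))$, the gap being the nonnegative tail over $[\C(t),\C(\sigma(t)))$. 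The gain \eqref{gain} is designed precisely so that its factor $B^{T}(I+\mu A^{T})^{-1}\G_{C_\alpha}^{-1}$ matches the first (``bottom'') term: abbreviating $W=\G_{C_\alpha}(t,\C(t))$ and $S=\mu(t)(I+\mu A)^{-1}BB^{T}(I+\mu A^{T})^{-1}\le W$, one gets $I+\mu(t)\bar A(t)=(1+\mu(t)\alpha)(I+\mu A)(I-SW^{-1})$, and then, using the identity, the monotonicity bound $Q(\sigma(t))\le\G_{C_\alpha}(\sigma(t),\C(t))^{-1}$, and the simplification $(I-W^{-1}S)(W-S)^{-1}(I-SW^{-1})=W^{-1}(W-S)W^{-1}$, the left side of~(ii) is bounded above by $\tfrac{(1+\mu(t)\alpha)^{-2}-1}{\mu(t)}\,W^{-1}$, hence by $-\nu I$ with $\nu=2\alpha/(\varepsilon_{2}(1+\mu_{\max}\alpha)^{2})>0$; in fact the same computation yields $V(\sigma(t))\le(1+\mu(t)\alpha)^{-2}V(t)$ for $V(t)=z(t)^{T}Q(t)z(t)$, so $\|z(t)\|\le\sqrt{\rho/\eta}\,e_{\alpha}(t,t_{0})^{-1}\|z(t_{0})\|$, i.e.\ the $z$-equation is uniformly exponentially stable at rate $\alpha/(1+\mu_{\max}\alpha)$. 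Lemma~\ref{expstablem} then returns rate $\alpha(2+\mu_{\max}\alpha)/(1+\mu_{\max}\alpha)^{2}$ for the closed loop.

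I expect two things to be delicate. The bulk of the work is the estimate in~(ii): carrying the moving endpoint $\C(t)$ and the nonnegative tail over $[\C(t),\C(\sigma(t)))$ through the algebra, covering right-dense points (where $\mu=0$ and the one-step identity degenerates to the continuous Lyapunov inequality, which the unified computation must also handle), and — this is why the \emph{fourth} power, not the second, appears in $\G_{C_\alpha}$ — absorbing the $O(\mu)$ and $O(\mu^{2})$ graininess corrections so the surviving boundary term stays dominant uniformly in $t$. The subtler point is the rate bookkeeping: the closed-loop rate $\alpha(2+\mu_{\max}\alpha)/(1+\mu_{\max}\alpha)^{2}$ is $\ge\alpha$ exactly when $\mu_{\max}\alpha\,(1+\mu_{\max}\alpha)\le1$, so one must either verify this under the standing hypotheses or record it as the (mild) restriction under which ``rate $\alpha$'' is attained — the fourth power being what raises the rate to this level from the $\alpha/(1+\mu_{\max}\alpha)$ a quadratic weight would give. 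Finally two loose ends should be cleared: that $A+BK$ is regressive so Theorem~\ref{stabilitycrit} applies (by the dynamic identity this is invertibility of $I-SW^{-1}$, automatic except in the degenerate one-step case $\C(t)=\sigma(t)$, where the feedback is deadbeat and stability is immediate), and that $Q\in C^{1}_{\mathrm{rd}}$ — or, if $\C$ is not $\Delta$-differentiable, that one may bypass Theorem~\ref{stabilitycrit} and estimate $z^{T}Q(t)z$ directly from the values $Q(t)$ and $Q(\sigma(t))$.
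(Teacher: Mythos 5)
Your proposal follows essentially the same route as the paper: the same Lyapunov matrix $Q(t)=\G_{C_\alpha}^{-1}(t,\C(t))$ fed into Theorem~\ref{stabilitycrit}, the same one-step Gramian identity (the paper's \eqref{identity1}, obtained by splitting the integral at $\sigma(t)$), the same monotonicity bound $\G_{C_\alpha}^{-1}(\sigma(t),\C(\sigma(t)))\le\G_{C_\alpha}^{-1}(\sigma(t),\C(t))$, and the same final pass through Lemma~\ref{expstablem}. Your closing observations --- the explicit rate bookkeeping showing the argument actually delivers rate $\alpha(2+\mu_{\max}\alpha)/(1+\mu_{\max}\alpha)^2$, so that ``rate $\alpha$'' in DaCunha's sense requires $\mu_{\max}\alpha(1+\mu_{\max}\alpha)\le 1$, together with the regressivity of $A+BK$ and the differentiability of $Q$ --- concern points the paper's own proof passes over in silence, and are worth recording.
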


\begin{proof}
We first note that for $N=\displaystyle \sup_{t \in \T}\frac{\log(1+\mu(t)\alpha)}{\mu(t)}$, we have $0<N<\infty$ since $\T$ has bounded graininess.  Thus,
    \begin{align*}
    e_{\alpha}(t,\C(t)) & =  \exp\left(-\int_{t}^{\C(t)}\frac{\log(1+\mu(s)\alpha)}{\mu(s)}\Delta s \right) \\
    & \geq   \exp\left(-\int_{t}^{\C(t)}N\Delta s \right) \\
    & =  e^{-N(\C(t)-t)} \\
    & \geq  e^{-MN}.
    \end{align*}
Comparing the quadratic forms $x^T\G_{C_\alpha}(t,\C(t))x$
and $x^T\G_{C}(t,\C(t))x$
gives
    $$
    e^{-4MN}\G_{C}(t,\C(t)) \leq \G_{C_\alpha}(t,\C(t)) \leq
\G_{C}(t,\C(t)), \qquad \text{for all }t\in\T.
    $$
Thus, \eqref{bound} implies
    \begin{equation}
    \label{boundweight}
    \varepsilon_1e^{-4MN}I \leq\G_{C_\alpha}(t,\C(t)) \leq \varepsilon_2 I,
    \text{for all }t\in\T,
    \end{equation}
and so the existence of
$\G_{C_\alpha}^{-1}(t,\C(t))$ is immediate.  Now, we show
that the linear state equation
    \begin{equation*}
    \label{weightvarchange}
    z^{\Delta}(t)=[\hat{A}(t)(1+\mu(t)\alpha)+\alpha I]z(t),
    \end{equation*}
where
    $$
    \hat{A}(t)=A(t)-B(t)B^T(t)(I+\mu(t)A^T(t))\G_{C_\alpha}^{-1}(t,\C(t)),
    $$
is uniformly exponentially stable by applying
Theorem~\ref{stabilitycrit} with the choice
    \begin{equation*}
    \label{qmatrix}
    Q(t)=\G_{C_\alpha}^{-1}(t,\C(t)).
    \end{equation*}
Lemma~\ref{expstablem} then gives the desired result.  To apply the theorem, we first note that $Q(t)$ is symmetric and continuously differentiable.  Thus, \eqref{boundweight} implies
    \begin{equation*}
    \label{qbound}
    \frac{1}{\varepsilon_2}I \leq Q(t) \leq \frac{e^{4MN}}{\varepsilon_1} I,\qquad\text{for all }t\in\T.
    \end{equation*}
Hence, it only remains to show that there exists $\nu>0$ such that
    $$
    \frac{\left[I+\mu(t)\left[(1+\mu(t)\alpha)\hat{A}(t)+\alpha I\right]^T\right]Q^\sigma(t)\left[I+\mu(t)\left[(1+\mu(t)\alpha)\hat{A}(t)+\alpha I \right]\right]-Q(t)}{\mu(t)}\le -\nu I.
    $$

We begin with the first term, writing
    \begin{align*}
    &\left[I+\mu(t)\left[(1+\mu(t)\alpha)\hat{A}(t)+\alpha I\right]^T\right]Q(\sigma(t))\left[I+\mu(t)\left[(1+\mu(t)\alpha)\hat{A}(t)+\alpha I \right]\right] \\
    & \qquad= (1+\mu(t)\alpha)^2\left[\left[I+\mu(t)A^T(t)\right]-\G_{C_\alpha}^{-1}(t,\C(t))\left[I+\mu(t)A(t)\right]^{-1}\mu(t)B(t)B^T(t)\right]\\ & \qquad\qquad \cdot  \G_{C_\alpha}^{-1}(\sigma(t),\C(\sigma(t)))
     \left[\left[I +\mu(t)A(t)\right]-\mu(t)B(t)B^T(t)\left[I+\mu(t)A^T(t)\right]^{-1}\G_{C_\alpha}^{-1}(t,\C(t))\right].
    \end{align*}
We pause to establish an important identity.  Notice that
    \begin{equation}\label{identity1}
    \left[I+\mu(t)A(t)\right]\G_{C_\alpha}(t,\C(t))\left[I+\mu(t)A^T(t)\right] =\mu(t)B(t)B^T(t)+\frac{\G_{C_\alpha}(\sigma(t),\C(t))}{(1+\mu(t)\alpha)^4}.
    \end{equation}
This leads to
    \begin{equation}\label{identity2}
    \begin{aligned}
    &I-\left[I+\mu(t)A(t)\right]^{-1}\mu(t)B(t)B^T(t)\left[I+\mu(t)A^T(t)\right]^{-1}
    \G_{C_\alpha}^{-1}(t,\C(t)) \\
    &=(1+\mu(t)\alpha)^{-4}\left[I+\mu(t)A(t)\right]^{-1}
    \G_{C_\alpha}(\sigma(t),\C(t))\left[I+\mu(t)A^T(t)\right]^{-1}\G_{C_\alpha}^{-1}(t,\C(t)),
    \end{aligned}
    \end{equation}
which in turn yields
    \begin{equation}\label{identity3}
    \begin{aligned}
    & I-\G_{C_\alpha}^{-1}(t,\C(t))\left[I+\mu(t)A(t)\right]^{-1}\mu(t)B(t)B^T(t)
    \left[I+\mu(t)A^T(t)\right]^{-1} \\
    &=(1+\mu(t)\alpha)^{-4}\G_{C_\alpha}^{-1}(t,\C(t))\left[I +\mu(t)A(t)\right]^{-1}\G_{C_\alpha}(\sigma(t),\C(t))\left[I+\mu(t)A^T(t)\right]^{-1}.
    \end{aligned}
    \end{equation}
The first term can now be rewritten as
    \begin{align*}
    &(1+\mu(t)\alpha)^2\left[\left[I+\mu(t)A^T(t)\right]-\G_{C_\alpha}^{-1}(t,\C(t))
    \left[I+\mu(t)A(t)\right]^{-1}\mu(t)B(t)B^T(t)\right]\\
    &\qquad\cdot\G_{C_\alpha}^{-1}(\sigma(t),\C(\sigma(t)))\left[\left[I +\mu(t)A(t)\right]-\mu(t)B(t)B^T(t)\left[I+\mu(t)A^T(t)\right]^{-1}\G_{C_\alpha}^{-1}(t,\C(t))\right]\\
    &=(1+\mu(t)\alpha)^2\left[I -\G_{C_\alpha}^{-1}(t,\C(t))\left[I+\mu(t)A(t)\right]^{-1}\mu(t)B(t)B^T(t)
    \left[I+\mu(t)A^T(t)\right]^{-1}\right]\\
    &\qquad\cdot\left[I+\mu(t)A^T(t)\right]\G_{C_\alpha}^{-1}(\sigma(t),\C(t))\left[I+\mu(t)A(t)\right]\\
    &\qquad\cdot\left[I-\left[I+\mu(t)A(t)\right]^{-1}\mu(t)B(t)B^T(t)\left[I+\mu(t)A^T(t)\right]^{-1}
    \G_{C_\alpha}^{-1}(t,\C(t))\right]
    \end{align*}
Using \eqref{identity2} and \eqref{identity3}, we can now write
    \begin{equation}\label{firstterm}
    \begin{aligned}
    &\left[I+\mu(t)\left[(1+\mu(t)\alpha)\hat{A}^T(t)+\alpha I\right]\right]Q(\sigma(t))\left[I+\mu(t)\left[(1+\mu(t)\alpha)\hat{A}(t)+\alpha I \right] \right]\\
    &=(1+\mu(t)\alpha)^{-6}\G_{C_\alpha}^{-1}(t,\C(t))\left[I+\mu(t)A(t)\right]^{-1}
    \G_{C_\alpha}(\sigma(t),\C(t))\G_{C_\alpha}^{-1}(\sigma(t),\C(\sigma(t)))\\
    &\qquad\cdot\G_{C_\alpha}(\sigma(t),\C(t))\left[I+\mu(t)A^T(t)\right]^{-1}\G_{C_\alpha}^{-1}(t,\C(t)).
    \end{aligned}
    \end{equation}
On the other hand, from the definition of $\G_{C_\alpha}(t,\C(t))$, we have
    $$
    \G_{C_\alpha}(\sigma(t),\C(\sigma(t))) \geq \G_{C_\alpha}(\sigma(t),\C(t)),
    $$
which in turn implies
    $$
    \G_{C_\alpha}^{-1}(\sigma(t),\C(\sigma(t))) \leq \G_{C_\alpha}^{-1}(\sigma(t),\C(t)).
    $$
Combining this with \eqref{firstterm} gives
    \begin{align*}
    &\left[I+\mu(t)\left[(1+\mu(t)\alpha)\hat{A}^T(t)+\alpha I\right]\right]Q(\sigma(t))\left[I+\mu(t)\left[(1+\mu(t)\alpha)\hat{A}(t)+\alpha I \right] \right]\\
    &\leq (1+\mu(t)\alpha)^{-6}\G_{C_\alpha}^{-1}(t,\C(t))\left[\left[I+\mu(t)A(t)\right]^{-1}
    \G_{C_\alpha}(\sigma(t),\C(t))\left[I+\mu(t)A^T(t)\right]^{-1}\right]\G_{C_\alpha}^{-1}(t,\C(t)).
    \end{align*}
Applying \eqref{identity1} again yields
    \begin{align*}
    &\left[I+\mu(t)\left[(1+\mu(t)\alpha)\hat{A}^T(t)+\alpha I\right]\right]Q(\sigma(t))\left[I+\mu(t)\left[(1+\mu(t)\alpha)\hat{A}(t)+\alpha I \right] \right] \\
    &\leq(1+\mu(t)\alpha)^{-6}\G_{C_\alpha}^{-1}(t,\C(t))\\
    &\qquad\cdot\left[(1+\mu(t)\alpha)^4 \G_{C_\alpha}(t,\C(t))-(1+\mu(t)\alpha)^4\left[I+\mu(t)A(t)\right]^{-1}
    \mu(t)B(t)B^T(t)\left[I+\mu(t)A^T(t)\right]^{-1}\right]\\
    &\qquad\cdot\G_{C_\alpha}^{-1}(t,\C(t))\\
    &\leq(1+\mu(t)\alpha)^{-2}\G_{C_\alpha}^{-1}(t,\C(t)).
    \end{align*}
Thus,
    \begin{align*}
    &\frac{\left[I+\mu(t)\left[(1+\mu(t)\alpha)\hat{A}^T(t)+\alpha I\right]\right]Q(\sigma(t))\left[I+\mu(t)\left[(1+\mu(t)\alpha)\hat{A}(t)+\alpha I \right] \right]-Q(t)}{\mu(t)} \\
    &\qquad\leq -\frac{(1+\mu(t)\alpha)^2-1}{\mu(t)(1+\mu(t)\alpha)^2}\G_{C_\alpha}^{-1}(t,\C(t)) \\
    &\qquad\leq -\frac{(1+\mu(t)\alpha)^2-1}{\mu(t)\varepsilon_2(1+\mu(t)\alpha)^2}I.
    \end{align*}
This last quantity is not necessarily constant, but since the graininess of $\T$ has a (presumably nonzero) upper bound,
    $$
    \frac{(1+\mu(t)\alpha)^2-1}{\mu(t)\varepsilon_2(1+\mu(t)\alpha)^2}
    =\frac{2\alpha+\mu(t)\alpha^2}{\varepsilon_2(1+\mu(t)\alpha)^2}\geq \frac{\alpha}{\varepsilon_2(1+\mu_{\max}\alpha)^2}.
    $$
Setting $\nu:=\alpha^{-1}\varepsilon_2(1+\mu_{\max}\alpha)^2$, we obtain
    $$
    \frac{\left[I+\mu(t)\left[(1+\mu(t)\alpha)\hat{A}^T(t)+\alpha I\right]\right]Q(\sigma(t))\left[I+\mu(t)\left[(1+\mu(t)\alpha)\hat{A}(t)+\alpha I \right] \right]-Q(t)}{\mu(t)}\le -\nu I.
    $$
\end{proof}

Several natural questions arise regarding Theorem~\ref{gramexpstab}:
    \begin{itemize}
    \item[(Q1)] Is the assumption \eqref{bound} for $\C(t)$ reasonable?
    \item[(Q2)] Does $\C(t)$ have a single, unified form regardless of the time scale?
    \item[(Q3)] How does any such unified $\C(t)$ compare to its analogue on $\R$ and $\Z$?
    \item[(Q4)] How does the state feedback gain in \eqref{gain} compare to its analogue on $\R$ and $\Z$?
    \end{itemize}

First, the assumed bound \eqref{bound} is not severe since it is a reformulation of the controllability Gramian invertibility criterion in Theorem~\ref{controlgramian}, and controllability of the open-loop system is a prerequisite for feedback stabilization. The requirement that $\C(t)$ be increasing on an interval just ensures a nondegenerate interval (in the time scale) on which the open-loop system is controllable.

In response to (Q2), for any $\delta_1,\delta_2>0$, we propose the controllability window
    \begin{equation*}\label{generalC}
    \C(t):=
    \begin{cases}
    t+\delta_1,             & \text{if }\s(t)=t,\\
    \sigma^k(t),            & \text{if $\sigma^i(t)\neq t$ for all $0 \leq i \leq k$},\\
    \sigma^k(t)+\delta_2,   & \text{else},
    \end{cases}
    \end{equation*}
where $\s^k$ means the composition of the forward jump operator $\s$ with itself $k-1$ times.

Note that, for $\T=\R$, $\C(t)=t+\delta$ for any $\delta>0$ is sufficient, while on $\T=\Z$, the function $\C(t)=t+k$ for $k \in \N$ meets the criteria. These coincide with the controllability windows found in the literature for both the continuous and discrete cases \cite{AnMi,CaDe,Ru}, giving a very satisfying answer to (Q3).

Finally, we remark that the general form of the state feedback gain in \eqref{gain} coalesces nicely with the known forms of $K(t)$. When $\T=\R$, \eqref{gain} takes the form
    $$
    K(t)=-B^T(t)\G_{C_\alpha}^{-1}(t,t+\delta),
    $$
where $\C(t)=t+\delta$, $\delta>0$, and it is shown in \cite{Ch,Ru} that the system is stabilized. On the other hand, when $\T=\Z$, \eqref{gain} becomes
    \begin{equation}\label{zgain}
    K(t)=-B^T(t)A^{-T}\G_{C_\alpha}^{-1}(t,t+k),
    \end{equation}
where $\C(t)=t+k$, $k\in\N$. Note that \eqref{zgain} is a shifted version (again, due to the difference equation formulation rather than recursive formulation of the problem) of the familiar discrete state feedback gain \cite{Ru}.

\section{Experimental Results}

Throughout the preceding discussion, it has been assumed that the time scale
is known \textit{a priori}; in other words, that a system's time domain is
known before the system dynamics \textquotedblleft start\textquotedblright\
at time time $t=0$. Under this assumption, it is possible to calculate
feedback gain $K(t)$ \textit{a priori} if the system's state matrices $A(t)$
and $B(t)$ are known. Scenarios in which non-standard time scales (not $\mathbb{R}$ or $h%
\mathbb{Z}
$)\ are useful may come about for different reasons. For example, it may be
that a computer controller cannot guarantee consistent hard deadlines (i.e.
``real time" response) for communication with sensors and actuators; in this
case, a time scale may be scheduled that is more amenable to the other tasks
the computer is performing. A similar problem may occur in a networked, or
distributed, control system, in which various network traffic activities
determine the time scale. In either case, if the time scale is known, or at
least known over some finite window into the future, the feedback gain may
be computed and applied in advance.

To illustrate the paper's central theorem in hardware, a simple experiment
was devised using a DC motor with an intertial mass. A system
identification procedure produced approximate 2$^{nd}$-order state matrices%
\begin{eqnarray*}
\hat{A} &=&\left[
\begin{array}{cc}
0 & 1 \\
0 & -0.15%
\end{array}%
\right] ,\qquad \hat{B}=\left[
\begin{array}{c}
0 \\
13.8%
\end{array}%
\right] , \\
\frac{d\hat{x}(t)}{dt} &=&\hat{A}\hat{x}(t)+\hat{B}\hat{u}(t),\qquad t\in
\mathbb{R}
,
\end{eqnarray*}%
\newline
where state vector $\hat{x}(t)$ is the motor's angular shaft position (rev)
and velocity (rev/s), and $\hat{u}(t)$ is the input voltage (V). Electrical dynamics were neglected due to the relatively small electrical time constant. The hat
notation designates $\hat{A}$ and $\hat{B}$ as the state matrices of a
dynamical system on $%
\mathbb{R}
$. Sample-and-hold discretization to an arbitrary time scale $\mathbb{T}$
gives
\[
A(t)=\left[ \frac{e^{\hat{A}\mu (t)}-I}{\mu (t)}\right] ,\qquad B(t)=\left[
\sum_{i=1}^{\infty }\frac{(\hat{A}\mu (t))^{i-1}}{i!}\right] \hat{B}%
,\qquad t\in \mathbb{T}.
\]%
Now equation (3.1a) is in force.

\begin{figure}
\includegraphics[scale=.75]{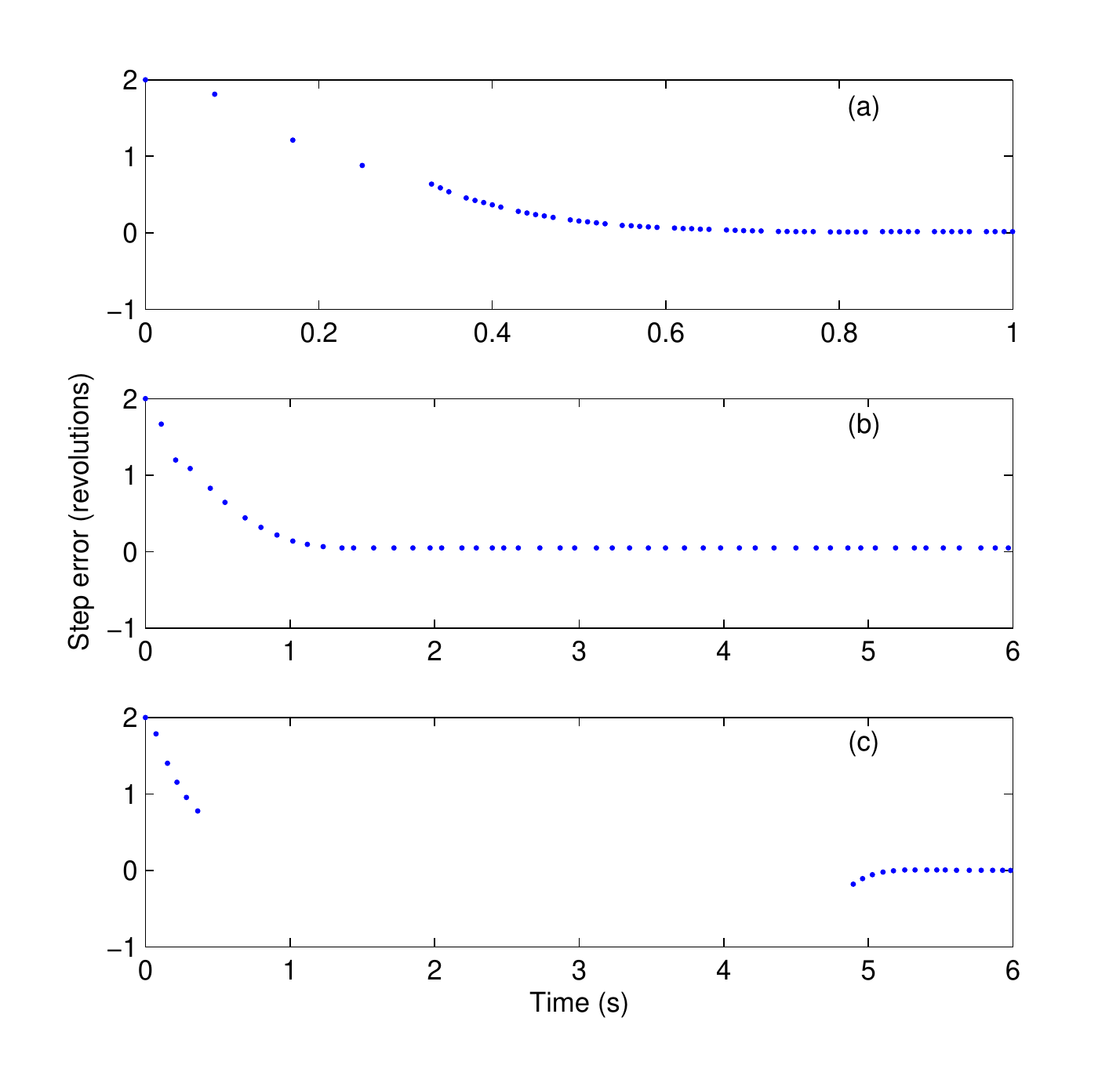}
\caption{Step responses for the three cases discussed in the text.
Note that the time axis in case (a) has been magnified in order to see the
individual points in the time scale. }
\label{FIG_Step_Responses}
\end{figure}

To begin, several discrete time scales $\mathbb{T}$ were selected and
populated with anywhere from $\ell=$ 20 to 100 points (all of the time scales
in these experiments were purely discrete with no continuous intervals).
Choosing a window operator $\mathcal{C}(t)=\sigma ^{k}(t)$ simply amounted
to choosing a window sized $k>0$. Some ramifications of this choice are
discussed later. Next, using MATLAB, $K(t)$ was computed over the first $\ell-k$
points in the time scale. $K(t)$ and $\mathbb{T}$, along with control law $u(t)=K(t)x(t)+N(t)r(t)$  where $N(t)\equiv 1$ and $r(t)=2h(t)$ where $h$ denotes a unit step function and $x(0)\neq 0$ were programmed into a computer running
the QNX operating system and outfitted with digital and analog input/output
hardware. Internal high-precision timers were employed so that the system
would acquire the motor states $x(t)$, and apply drive current $u(t)$, only
at the pre-determined points in $t\in \mathbb{T}$. The resulting state
trajectories therefore illustrate the closed-loop system step response.

Three examples of the closed-loop step response are shown in Figure~\ref{FIG_Step_Responses}. Time scale $\mathbb{T}_{a}$ of example (a) was created
with widely varying graininess. Graininess $\mu _{a}(t)$ occurs in multiples
of 10ms, with the first four points exhibiting graininess of 80 or 90ms, and
points thereafter exhibiting graininess of 10 or 20ms. This time scale was
designed to emulate the timing of a real-time process that is unable to meet
hard 10ms deadlines. If a deadline is missed, i.e. the controller cannot
respond at the next specified $t\in \mathbb{T}_{a}$, the next point in the
time scale is scheduled some multiple of 10ms in the future. Time scale $%
\mathbb{T}_{b}$ of example (b) exhibits graininess from a uniformly random
distribution between 80 and 150ms. The third example (c) combines two
interesting phenomena: a time scale $\mathbb{T}_{c}$ of uniformly random
distribution, with a very large gap in the middle. Example (c) is
particularly interesting because it can be seen that the controller has
computed its best estimate (as close as the model allows) of the open-loop
constant input current required to move the motor shaft to near-zero error
by the end of the gap.

\begin{figure}
\includegraphics[scale=.75]{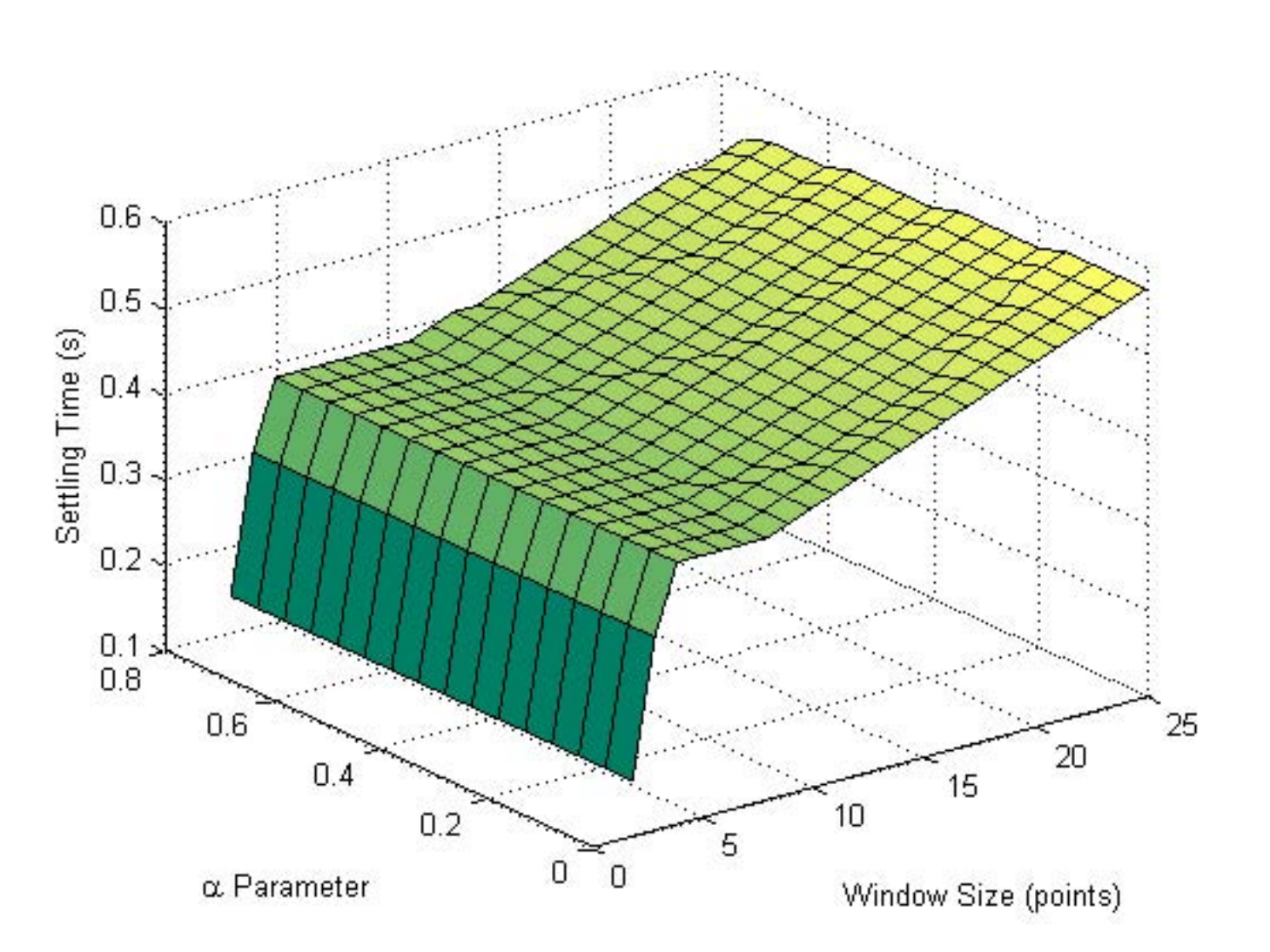}
\caption{An image plotting parameter $\protect\alpha $ versus
window size $k$ versus settling time for experiment (a). The surface is
rough because of the discretization of the time scale.}
\label{FIG_BSB_T_SETTLE}
\end{figure}

\begin{figure}
\includegraphics[scale=.75]{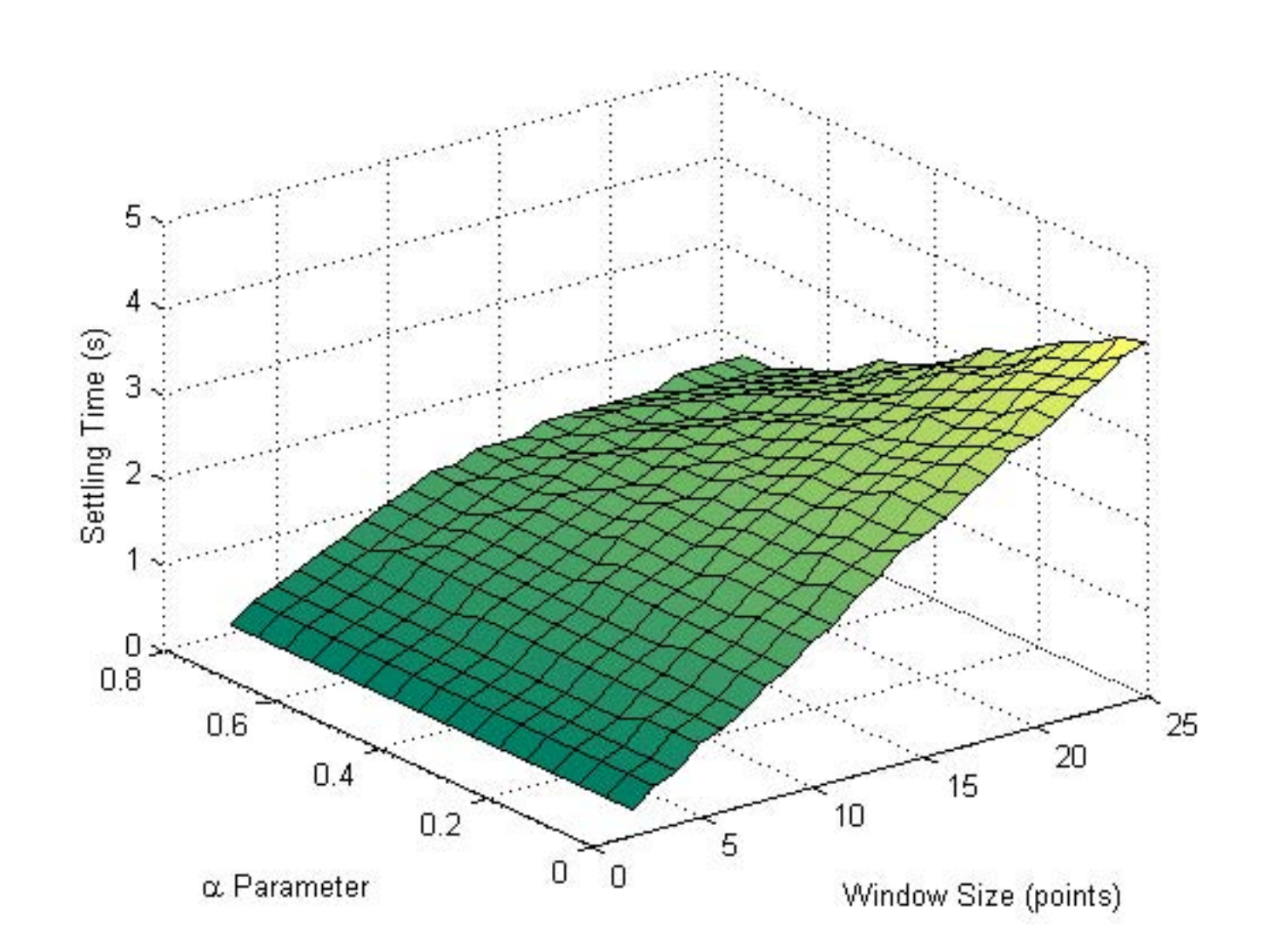}
\caption{An image plotting parameter $%
\protect\alpha $ versus window size $k$ versus settling time for experiment
(b).}
\label{FIG_RANDOM_T_SETTLE}
\end{figure}

In each example, it was necessary to choose a window size $k$ as well as a
constant $\alpha $ for the computation of $K(t)$. The performance impact of
different choices is not obvious, and is explored through simulation in
Figures~\ref{FIG_BSB_T_SETTLE} and \ref{FIG_RANDOM_T_SETTLE}. These figures show
how step response settling times (the time required for the response to fall
within 10\% of its final value) are dependent on $k$ and $\alpha $, for $%
k\in \{2,...,25\}$ and $\alpha \in \lbrack 0.01,0.79]$. Figure~\ref{FIG_BSB_T_SETTLE} shows that the settling time of example (a) is relatively
insensitive to changes in $\alpha $, but highly sensitive to changes in
window size $k$. Small windows would seem to produce better performance;
however they also induce large magnitudes in the values of \thinspace $K$,
which in turn produce large magnitudes in $u$ that may exceed the physical
limitations of the system. Figure~\ref{FIG_RANDOM_T_SETTLE}\ shows the same
analysis for example (b); one difference is that now the settling time is
somewhat more sensitive to the choice of $\alpha $.

These examples illustrate that the full-state, closed-loop feedback $%
u(t)=K(t)x(t)$ will indeed stabilize a simple 2nd-order system on a variety
of interesting time scales. However, there are still several practical
limitations to overcome. First, the actual computation of $K(t)$ is very
complex, and it is doubtful that small embedded control processors could
compute $K(t)$ in real time. Second, $K(t)$ depends on knowledge of the time
scale over some finite future window (defined by the operator $\mathcal{C}(t)
$). Thus, $K$ is not strictly causal (although it does not depend on
knowledge of the system states in the future). Lastly, $K$ depends on
knowledge of the system parameters, which are often not well known. It
should be noted, however, that the first and third of these limitations also
apply in the ``classical" cases (feedback control on $%
\mathbb{R}
$ and $%
\mathbb{Z}
$). The second limitation is obviated on $%
\mathbb{R}
$ and $%
\mathbb{Z}
$ because the time scale is always known \textit{a priori}.


\end{document}